\newcommand{\field}[1]{\mathbb{#1}}
\newcommand{\C}{\field{C}}
\newcommand{\R}{\field{R}}
\newcommand{\id}[1]{\mathfrak{#1}}
\newcommand{\textswab}[1]{\id{#1}}
\newcommand{\ignore}[1]{}
\newtheoremstyle{s2}{9pt}{9pt}{\rm}{}{\bf}{.}{0.5em}{}
\theoremstyle{s2}
\newtheorem{defi}{Definition}[section]
\newtheorem{re}[defi]{Remark}
\newtheoremstyle{s1}{9pt}{9pt}{\it}{}{\bf}{.}{0.5em}{}
\theoremstyle{s1}
\newtheorem{lem}[defi]{Lemma}
\newtheorem{theo}[defi]{Theorem}
\newtheorem{co}[defi]{Corollary}
\newtheorem{pr}[defi]{Proposition}
\DeclareMathOperator{\rank}{rank}
\DeclareMathOperator{\Sing}{Sing}
\DeclareMathOperator{\mult}{mult}
\DeclareMathOperator{\codim}{codim}
\newcommand{\pa}[2]{\frac{\partial #1}{\partial #2}}
\title[Effective Whitney theorem for complex polynomial mappings of the plane]{Effective Whitney theorem for complex polynomial mappings of the plane} \makeatletter
\author{M. \ Farnik \& Z. Jelonek \& M.A.S. Ruas}
\address[M. Farnik]{Instytut Matematyczny\\
Polska Akademia Nauk\\
\'Sniadeckich 8, 00-656 Warszawa, Poland}
\email{michal.farnik@gmail.com}
\address[Z. Jelonek]{Instytut Matematyczny\\
Polska Akademia Nauk\\
\'Sniadeckich 8, 00-656 Warszawa, Poland}
\email{najelone@cyf-kr.edu.pl}
\address[M.A.S. Ruas]{Departamento de Matem\'atica,
ICMC-USP, Caixa Postal 668, 13560-970 S\~ao Carlos, S.P., Brasil}
\email{maasruas@icmc.usp.br}
\keywords{ polynomials, folds, cusp singularities}
\subjclass{14 D 06, 14 Q 20.}
\thanks{The first two authors are partially supported by the grant of Narodowe Centrum Nauki, grant number 2015/17/B/ST1/02637, the third author is partially
supported by CNPq grant 305651/2011-0  and FAPESP grant
2014/00304-2}
\begin{document}

\begin{abstract}
 We describe the topology of a general polynomial mapping $F=(f, g):\C^2\to\C^2$, where $\deg f=d_1$, $\deg g=d_2$.
We show that the set $C(F)$ of critical points of $F$ is a smooth connected curve, which is topologically equivalent to a sphere with $\frac{(d_1+d_2-3)(d_1+d_2-4)}{2}$ handles and $d_1+d_2-2$ points removed. Moreover, the discriminant of $\Delta(F)=F(C(F))$ is a curve birationally equivalent to $C(F)$ which has only cusps and nodes as singularities and it has
$$c(F)=d_1^2+d_2^2+3d_1d_2-6d_1-6d_2+7$$
simple cusps and
$$d(F)=\frac{1}{2}\left[(d_1d_2-4)((d_1+d_2-2)^2-2)-(d-5)(d_1+d_2-2)-6\right]$$
nodes  (here $d = \gcd(d_1,d_2)$).
If $d_1=d_2$ then $\Delta(F)$ has $d_1+d_2-2$ smooth points at infinity, at which it is tangent to the line at infinity with multiplicity $d_1$. If $d_1>d_2$ then $\Delta(F)$ has exactly one singular point $P$ at infinity with $d_1+d_2-2$ branches and with delta invariant
$$\delta_P=\frac{1}{2}d_1(d_1-d_2)(d_1+d_2-2)^2+\frac{1}{2}(-2d_1+d_2+d)(d_1+d_2-2).$$

Finally let $F=(f, g):\C^2\to\C^2$ be an arbitrary polynomial mapping
with $\deg f\le d_1$ and $\deg g\le d_2.$ Assume that $F$  has
generalized cusps at points $a_1,\ldots, a_r$. Then $\sum^r_{i=1}
\mu_{a_i}\le d_1^2+d_2^2+3d_1d_2-6d_1-6d_2+7$, where $\mu_{a_i}$
denotes the index of a generalized cusp at the point $a_i$.
\end{abstract}

\maketitle

\bibliographystyle{alpha}

\section{Introduction}

\noindent In \cite{wh} Whitney showed that a general smooth
mapping $F: \R^2\to \R^2$ has only two-folds and simple cusps as
singularities. The problem of  counting the number of cusps of a
general perturbation of a plane-to-plane real singularity was
considered by Fukuda and Ishikawa in \cite{fi}. They proved that
the number modulo 2 of cusps of a general perturbation $F$ of a
finitely determined map-germ $F_0:(\mathbb R^{2},
0)\rightarrow(\mathbb R^{2},0)$ is a topological invariant of
$F_0$. More recently, in \cite{ks} Krzy\.zanowska and Szafraniec
gave an algorithm  to compute the number of cusps for sufficiently
general fixed real polynomial mapping of the real plane.

Algebraic formulas to count the number of cusps and nodes of a general
perturbation of a finitely determined holomorphic map-germ $F_0:
(\mathbb C^2,0) \to (\mathbb C^2,0)$,  were given by Gaffney and
Mond in \cite{gm1, gm2}. In this case any two general
perturbations $F$ of $F_0$ defined on a sufficiently small
neighborhood of $0$ are topologically equivalent, so the
number of cusps and nodes of $F$ is an invariant of the map-germ $F_0$.
 
Here we consider the global complex case. It is proved in
\cite{jel1} that two general polynomial mappings $F:\C^n\to \C^m$
(where $n\le m$) of fixed degree have the same topology. In
particular they have the same number of topological invariants. We
consider here the simplest case $F:\C^2\to\C^2$ and describe the topology of a general polynomial
mapping $F=(f, g):\C^2\to\C^2$, where $\deg f=d_1$, $\deg g=d_2$.
In Section \ref{secCF} we show that the set $C(F)$ of critical points of $F$
is a smooth connected curve which is topologically equivalent to
a sphere with $\frac{(d_1+d_2-3)(d_1+d_2-4)}{2}$ handles and
$d_1+d_2-2$ points removed. In Section \ref{secDF} we show that the discriminant of $\Delta(F)=F(C(F))$
is a curve birationally equivalent to $C(F)$ which has only cusps and nodes as singularities and it has
$$c(F)=d_1^2+d_2^2+3d_1d_2-6d_1-6d_2+7$$
simple cusps and
$$d(F)=\frac{1}{2}\left[(d_1d_2-4)((d_1+d_2-2)^2-2)-(d-5)(d_1+d_2-2)-6\right]$$
nodes  (here $d = \gcd(d_1,d_2)$).
If $d_1=d_2$ then $\Delta(F)$ has $d_1+d_2-2$ smooth points at infinity, at which it is tangent to the line at infinity with multiplicity $d_1$. If $d_1>d_2$ then $\Delta(F)$ has exactly one singular point $P$ at infinity with $d_1+d_2-2$ branches and with delta invariant $$\delta_P=
\frac{1}{2}d_1(d_1-d_2)(d_1+d_2-2)^2+\frac{1}{2}(-2d_1+d_2+d)(d_1+d_2-2).$$

We conclude the paper with Section \ref{secGC} where we introduce the notions of a generalized cusp and the index of a generalized cusp $\mu$ (see Definitions \ref{dfGenCus} and \ref{dfGenCusIn}). We show that if $F=(f, g):\C^2\to\C^2$ is an arbitrary polynomial mapping
with $\deg f\le d_1$, $\deg g\le d_2$ and generalized cusps at points $a_1,\ldots, a_r$ then $\sum^r_{i=1}
\mu_{a_i}\le d_1^2+d_2^2+3d_1d_2-6d_1-6d_2+7$.

Let us note that in the special case when $\gcd(d_1,d_2)=1$ the numbers $c(F)$ and $d(F)$ can be also computed by using local methods of Gaffney and Mond. Indeed, in that case we can choose a homogenous mapping $F\in \Omega(d_1,d_2)$, which is finitely determined and show that there is a deformation $F_t$ of a generic mapping to $F$ such that all cusps and nodes of $F_t$ tend to $0$ when $t \rightarrow 0$. In this case our formulas for $c(F)$ and $d(F)$ coincide with formulas of Gaffney-Mond. However in the general case this approach does not work since the appropriate homogenous mapping is not finitely determined.

\section{General polynomial mappings}
Let $\Omega_n(d_1,\ldots,d_n)$ denote the space   of polynomial
mappings $F:\C^n\to\C^n$ of multi-degree bounded by  $d_1,\ldots,
d_n$. Of course $\Omega_n(d_1,\ldots,d_n)$ has the structure of
the affine space. By $J^q(n)$ we denote the space of $q$-jets of
polynomial mappings $F=(f_1,\ldots, f_n): \C^n\to\C^n$. We define
it exactly as in \cite{gg}. However, if we fix coordinates in
the domain and the target then we can identify $J^q(n)$ with the
space $\C^n\times \C^n \times (\C^{N_q})^n$, where $\C^{N_q}$
parameterizes coefficients of polynomials of $n$-variables and of
degree bounded by $q$ with zero constant term (which correspond to
suitable Taylor polynomials). In further applications, in most
cases, we treat the space $J^q(n)$ in this simple way. In
particular for a given polynomial mapping $F: \C^n \to \C^n$ we
can define the mapping $j^q(F)$ as
$$j^q(F) : \C^n \ni x \mapsto \left(x, F(x),\left(\frac{\partial^{|\alpha|}{f_i}}{\partial x^\alpha}(x)\right)_{1\le
i\le n,1\le |\alpha|\le q}\right)\in J^q(n).$$

We start with the following observation:

\begin{pr}\label{generic}
Assume that $d_i\ge q$ for $i=1,\ldots,n$. Let $S_1,\ldots, S_k$ be
smooth algebraic  submanifolds of $J^q(n)$. Then there is a
Zariski open dense subset
$\Omega_n(d_1,\ldots,d_n)(S_1,\ldots,S_k)\subset \Omega_n(d_1,\ldots,d_n)$
such that for every $F\in \Omega_n(d_1,\ldots,d_n)(S_1,\ldots,S_k)$ we
have $$j^q(F)\pitchfork S_i, \ for \ i=1,\ldots,k.$$
\end{pr}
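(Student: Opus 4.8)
The plan is to realize the required set through a jet evaluation map and then run a parametric transversality argument, upgraded from the classical measure-theoretic conclusion to a Zariski-topology statement by means of the algebraic Sard theorem. First I would introduce the morphism of affine varieties
$$\Phi : \Omega_n(d_1,\ldots,d_n) \times \C^n \to J^q(n), \qquad \Phi(F,x) = j^q(F)(x).$$
The first key step is to check that $\Phi$ is a submersion, and this is exactly where the hypothesis $d_i\ge q$ is used. Fixing $x$ and varying only the coefficients of $F$, the map $F\mapsto \Phi(F,x)$ already surjects onto the whole fiber $\{x\}\times\C^n\times(\C^{N_q})^n$ of $J^q(n)$ over $x$, because a single component $f_i$ of degree $\le d_i$ with $d_i\ge q$ can realize arbitrary Taylor data up to order $q$ at $x$ (take $f_i=\sum_{|\alpha|\le q}c_\alpha (y-x)^\alpha$). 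Letting $x$ vary then accounts for the remaining source coordinate, since the projection $J^q(n)\to\C^n$ onto the source factor restricts to the identity in $x$. Hence $d\Phi$ is everywhere surjective.

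Since a submersion is transverse to every submanifold, $\Phi\pitchfork S_i$ for each $i$, so $W_i:=\Phi^{-1}(S_i)$ is a smooth locally closed algebraic subvariety of $\Omega_n(d_1,\ldots,d_n)\times\C^n$. Let $\pi_i:W_i\to\Omega_n(d_1,\ldots,d_n)$ be the restriction of the projection. The parametric transversality theorem then says that $j^q(F)\pitchfork S_i$ precisely when $F$ is a regular value of $\pi_i$: for such $F$ and any $x$ with $(F,x)\in W_i$, transversality of $\Phi$ at $(F,x)$ together with surjectivity of $d\pi_i$ forces $j^q(F)$ to be transverse to $S_i$ at $x$. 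Thus it suffices to produce, for each $i$, a Zariski open dense set of regular values of $\pi_i$.

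This is the step where the classical conclusion must be replaced by an algebraic one. Working component by component on $W_i$ and using $\mathrm{char}\,\C=0$, the algebraic Sard theorem (generic smoothness) shows that the locus of critical values of $\pi_i$ is contained in a proper Zariski closed subset $Z_i$: on each irreducible component of $W_i$ whose image is dense, generic smoothness yields a dense open over which $\pi_i$ is smooth, while a component with non-dense image contributes only points of its (proper closed) image, outside of which every $F$ is vacuously a regular value. Setting $U_i=\Omega_n(d_1,\ldots,d_n)\setminus Z_i$, each $U_i$ is Zariski open and dense, and every $F\in U_i$ satisfies $j^q(F)\pitchfork S_i$.

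Finally, since $\Omega_n(d_1,\ldots,d_n)$ is an irreducible affine space, the finite intersection $\Omega_n(d_1,\ldots,d_n)(S_1,\ldots,S_k):=\bigcap_{i=1}^k U_i$ is again Zariski open and dense, and by construction every $F$ in it satisfies $j^q(F)\pitchfork S_i$ for all $i$. I expect the main obstacle to be this last transition: making precise that the bad locus is not merely of measure zero but not Zariski dense, which requires the characteristic-zero Sard/generic-smoothness input and careful bookkeeping over the possibly reducible $W_i$ whose projections need not be dominant. The submersion verification, though it is where $d_i\ge q$ enters, is comparatively routine.
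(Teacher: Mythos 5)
Your proposal is correct, and its first half coincides with the paper's own proof: the paper introduces exactly the same jet evaluation map $\Psi(F,x)=j^q(F)(x)$, observes that it is a submersion (this is where $d_i\ge q$ enters), and then reduces to a parametric transversality statement. Where you genuinely diverge is in how one passes from ``transversality holds for most parameters'' to ``transversality holds on a Zariski open dense set''. The paper does this in two cited steps: the classical transversality theorem with a parameter gives that the set of $F$ with $j^q(F)\pitchfork S_i$ is dense, and this set is then asserted to be constructible, so that it must contain a Zariski open dense subset $U_i$. You instead remain entirely inside the algebraic category: you form $W_i=\Phi^{-1}(S_i)$ (smooth and locally closed because $\Phi$ is a submersion), reformulate transversality of $j^q(F)$ as $F$ being a regular value of the projection $\pi_i\colon W_i\to\Omega_n(d_1,\ldots,d_n)$, and then apply generic smoothness (algebraic Sard in characteristic zero) component by component, with non-dominant components disposed of via their proper closed images; since $W_i$ is smooth its irreducible components are disjoint, so regularity on each component is regularity on $W_i$, and the bad locus is exhibited directly as a proper Zariski closed set $Z_i$. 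The paper's route is shorter but leans on an unproved constructibility claim and on mixing the Euclidean and Zariski topologies (a dense constructible set contains a Zariski open dense set); your route is self-contained, purely algebraic, and makes the bookkeeping over possibly reducible or non-dominant $W_i$ explicit. Both arguments share the same two pillars (the evaluation submersion and parametric transversality), so the difference lies in the concluding lemma, not the overall strategy.
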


\begin{proof}
Consider the mapping $$\Psi: \Omega_n(d_1,\ldots,d_n)\times \C^n\ni
(F,x)\mapsto j^q(F)(x)\in J^q(n).$$ It is easy to see that $\Psi$
is a submersion. Fix $1\le i\le k$. By the transversality theorem
with a parameter the set of polynomials $F\in
\Omega_n(d_1,\ldots,d_n)$ such that $j^q(F)$ is transversal to $S_i$
is dense in $\Omega_n(d_1,\ldots,d_n)$. On the other side this set is
constructible in $\Omega_n(d_1,\ldots,d_n)$.
We conclude that there is a
Zariski open  dense subset $U_i\subset \Omega_n(d_1,\ldots,d_n)$ such
that for every $F\in U_i$ we have $j^q(F)\pitchfork S_i$. Now it
is enough to take
$\Omega_n(d_1,\ldots,d_n)(S_1,\ldots,S_k)=\bigcap^k_{i=1} U_i$.
\end{proof}

\begin{defi}
Let $S^1_k\subset J^1(n)$ denote the subvariety of $1$-jets of
corank $k$. Let $F\in \Omega_n(d_1,\ldots,d_n)$. We say that
$F$ is one-generic if $F$ is proper and $j^1(F)\pitchfork S^1_1$.
\end{defi}

By Proposition \ref{generic} the subset of one-generic mappings
contains a Zariski open dense subset of
$\Omega_n(d_1,\ldots,d_n)$. The following  is a well known result
(see for instance \cite{gg}). Our proof is based on Corollary 1.11
in \cite{jel}.

\begin{theo}\label{disc}
Let $F\in \Omega_n(d_1,\ldots,d_n)$ be  one-generic. Let $C(F)$
denote the set of critical points of $F$. Then there is an open
and dense subset $U\subset C(F)$ such that for every $a\in U$ the
germ $F_a: (\C^n,a)\to (\C^n, F(a))$ is holomorphically equivalent
to a two-fold.
\end{theo}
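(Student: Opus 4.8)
The plan is to isolate, on a dense Zariski-open subset $U\subset C(F)$, the three local conditions that characterize a fold, and then to bring $F$ into the standard form $(x_1,\dots,x_n)\mapsto(x_1^2,x_2,\dots,x_n)$ by an explicit coordinate change. Concretely, I would show that on $U$ the critical set $C(F)$ is smooth of codimension one, the rank of $DF$ equals exactly $n-1$, and the kernel line $\ker DF_a$ is transverse to $T_aC(F)$. The smoothness is precisely the content of one-genericity: writing $C(F)=\{\det DF=0\}$, the reduced equation of $\overline{S^1_1}$ is $\det$ and $S^1_1$ is its smooth part, so its conormal at a corank-one jet is spanned by $d(\det)$; hence $j^1(F)\pitchfork S^1_1$ is equivalent to $d(\det DF)_a\neq0$ at every point with $\rank DF_a=n-1$, and there $C(F)$ is smooth of codimension one. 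It then remains to produce the rank and transversality conditions on a dense open subset and to carry out the normal-form reduction. Note also that properness forces $F$ to have finite fibres (a fibre is a compact affine variety, hence finite), so $F$ is dominant and $\det DF\not\equiv0$; thus $C(F)$ is a hypersurface of pure codimension one.

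The heart of the argument, and the step I expect to be the main obstacle, is the density of the fold locus, i.e. ruling out that the kernel of $DF$ is everywhere tangent to $C(F)$ (or that the rank drops further) along a whole component. Here I would again use properness. Let $Z$ be an irreducible component of $C(F)$ and suppose that $\ker DF_a\cap T_aZ\neq0$ for every smooth $a\in Z$. Since $\dim T_aZ=n-1$, the restriction $F|_Z$ would then have a nontrivial kernel at every smooth point, so $\rank d(F|_Z)\le n-2$ generically and $\dim F(Z)\le n-2$; consequently the fibres of $F|_Z$ would be positive-dimensional over a dense subset of $F(Z)$, contradicting the finiteness of the fibres of $F$. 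Therefore on a dense open $U\subset C(F)$ one has $\ker DF_a\cap T_aZ=0$. Comparing dimensions, $\dim\ker DF_a+(n-1)\le n$ forces $\dim\ker DF_a=1$, i.e. $\rank DF_a=n-1$; combined with $\ker DF_a\cap T_aZ=0$ this gives $\ker DF_a\not\subset T_aC(F)$, the fold transversality. This is exactly where properness (the role played by Corollary 1.11 of \cite{jel}) is indispensable, since one-genericity alone controls only the first-order stratum $S^1_1$.

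Finally, on $U$ the normal form follows by a classical computation. At $a\in U$ the last $n-1$ components (after a linear change in the target) form a submersion, so we may choose source coordinates centered at $a$ with $f_i=x_i$ for $i\ge2$, bringing $F$ to the prepared form $F=(f_1,x_2,\dots,x_n)$; then $\det DF=\partial f_1/\partial x_1$, $C(F)=\{\partial f_1/\partial x_1=0\}$, and $\ker DF_a=\C\,\partial/\partial x_1$. The fold transversality $\ker DF_a\not\subset T_aC(F)$ reads $\partial^2 f_1/\partial x_1^2(a)\neq0$, so $f_1$ has a nondegenerate critical point in the variable $x_1$ with $x_2,\dots,x_n$ as parameters. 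The holomorphic Morse lemma with parameters then provides a biholomorphic change $x_1\mapsto y_1$ fixing the remaining variables and a holomorphic function $c(x_2,\dots,x_n)$ with $f_1=y_1^2+c(x_2,\dots,x_n)$; absorbing $c$ by the target translation $u_1\mapsto u_1-c(u_2,\dots,u_n)$ yields $F\sim(y_1^2,x_2,\dots,x_n)$, the two-fold. Hence every $a\in U$ is a fold, which completes the argument.
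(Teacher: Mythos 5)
Your proof is correct, but it takes a genuinely different route from the paper's. The paper gets the normal form almost for free from an external result: it sets $U=C(F)\setminus F^{-1}(\Sing(\Delta))$, where $\Delta=F(C(F))$, observes that for $a\in U$ the germ of the discriminant of $F_a$ is smooth, and invokes Corollary 1.11 of \cite{jel} (a proper holomorphic germ with smooth discriminant is biholomorphically equivalent to a $k$-fold $(x_1,\ldots,x_n)\mapsto(x_1^k,x_2,\ldots,x_n)$); one-genericity is used only at the very end to force $k=2$, since transversality of $j^1(F)$ to $S^1_1$ is invariant under the equivalence, and $\det$ composed with the jet of a $k$-fold, namely $kx_1^{k-1}$, is a submersion at $0$ only when $k=2$. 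You never look at the discriminant: you prove the classical Whitney/Golubitsky--Guillemin fold criterion directly, with one-genericity supplying smoothness of $C(F)$ (and the identification $T_aC(F)=\ker d(\det DF)_a$) at corank-one points, properness entering only through finiteness of fibres --- your dimension-count contradiction showing no component of $C(F)$ can have the kernel of $DF$ everywhere tangent, which simultaneously rules out corank $\geq 2$ along a component --- and the holomorphic Morse lemma with parameters producing the normal form. Both arguments are sound; your per-component density argument does yield a Zariski open dense $U$, since the failure locus (corank $\geq 2$, or kernel tangency on the corank-one locus) is algebraically closed and, by your contradiction, proper in each component. What each approach buys: the paper's proof is shorter given the cited corollary and fits its later detailed study of $\Delta(F)$, while yours is self-contained, replaces the appeal to \cite{jel} by elementary dimension counting plus a standard normal-form computation, and makes explicit exactly where each hypothesis (properness versus transversality to $S^1_1$) is used --- your observation that properness is what excludes degeneration along whole components is precisely the role that Corollary 1.11 plays in the paper's version.
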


\begin{proof}
Let $\Delta=F(C(F))$ be the discriminant of $F$. Take $U=C(F)\setminus
F^{-1}(\Sing(\Delta))$. The set $U$ is a Zariski open dense subset of
$C(F)$. Take a point $a\in U$ and consider the germ $F_a:
(\C^n,a)\to (\C^n, F(a))$. By the choice of the point $a$ the germ
of the discriminant of $F_a$ is smooth. Hence by \cite{jel}, Corollary 1.11, the
germ $F_a$ is biholomorphically equivalent to a $k$-fold:
$(\C^n,0)\ni (x_1,\ldots, x_n)\mapsto (x_1^k,x_2,\ldots, x_n)\in
(\C^n,0)$. In particular ${\rm corank} [F_a]=1.$

Now note that
$J^1(n)\cong \C^n\times \C^n\times M(n,n)$, where $M(n,n)=\{
[a_{ij}], \ 1\le i,j\le n\}$ is the set of $n\times n$ matrices.
In these coordinates the set $S^1_1$ is given as $\{ (x,y,m) : \det
[m_{ij}]=\phi(x,y,m)=0\}$ on the open subset $\{ (x,y,m) :  {\rm corank} [m_{ij}]\le 1\}$. Since the
mapping $j^1(F)$ is transversal to $S^1_1$ the mapping $\phi\circ
j^1(F)=kx_1^{k-1}$ has to be a submersion at $0$. This is possible
only for $k=2$.
\end{proof}

For convenience of the reader we will give a short introduction to (the simplest case of) the Thom-Boardman singularities (see e.g. \cite{gg}):

\begin{defi}
We  define the set $S^k_1\subset J^k(n)$  as follows:

\begin{enumerate}
\item for $k=1$ it is the set of elements $\sigma=[F_a]$ such that ${\rm
corank}\ d_a F_a=1$,

\item $\sigma\in S^k_1$ if for a representant $[F_a]$ of
$\sigma$ we have $j^{k-1}(F_a)(a)\in S^{k-1}_1$, and $j^{k-1}(F_a)$ is transversal to $S^{k-1}_1$
and
$${\rm corank}\ d_a F_a|_{S^{k-1}_1(F_a)}=1.$$
\end{enumerate}
\end{defi}

We prove that all varieties $S^k_1$ are smooth (at least in all
cases which we need). Consequently for a mapping $F$ as above
the varieties $S^k_1(F)=j^k(F)^{-1}(S^k_1)$ are smooth, hence the
definition makes sense. In fact we show that on
a dense open subset of $J^q(n)$ the sets $S^k_1$ are described by algebraic
equations and they are locally closed smooth varieties.

\section{Plane mappings}\label{secCF}
Here we will study the set $\Omega_2(d_1,d_2)$.
Let us denote coordinates in $J^1(2)$ by
$$(x,y,f,g,f_x,f_y,g_x,g_y).$$ For a mapping
$F=(f,g)\in\Omega_2(d_1,d_2)$, we have $$j^1(F)=(x,y,f(x,y),g(x,y),
\frac{\partial{f}}{\partial{x}}(x,y),
\frac{\partial{f}}{\partial{y}}(x,y),
\frac{\partial{g}}{\partial{x}}(x,y),
\frac{\partial{g}}{\partial{y}}(x,y)),$$ which justifies our
notation. The set $S^1_1$ is given by the equation
$\phi(x,y,f,g,f_x,f_y,g_x,g_y)=f_xg_y-f_yg_x=0$. Since $S^1_1$
describes elements of rank one it is easy to see that it is a
smooth (non-closed) subvariety of $J^1(2)$.

Now we would like to describe the set $S^2_1$. We restrict our
attention only to  sufficiently general mappings. In the space
$J^2(2)$ we introduce coordinates $$(x,y,f,g,f_x,f_y,g_x,g_y,
f_{xx}, f_{yy}, f_{xy}, g_{xx}, g_{yy}, g_{xy}).$$ A generic
mapping $F$ satisfies $\rank d_a F\ge 1$ for every $a$ (because
$\codim S^1_2=4$). We can assume that $F=(f,g)$ and $\nabla_a
f\not=0$. The critical set of $F$ is exactly the set $S^1_1(F)$
and it has a reduced equation
$\frac{\partial{f}}{\partial{x}}(x,y)\frac{\partial{g}}{\partial{y}}(x,y)-
\frac{\partial{f}}{\partial{y}}(x,y)\frac{\partial{g}}{\partial{x}}(x,y)=0$,
which by simplicity we write as $f_x g_y-f_y g_x=0$. In particular
the tangent line to $S^1_1(f)$ is given as
$$(f_{xx}g_y+f_xg_{xy}-f_{xy}g_x-f_yg_{xx})v+(f_{xy}g_y+f_xg_{yy}-f_{yy}g_x-f_yg_{xy})w=0.$$
Consequently the condition for $[F_a]\in S^2_1$ is:
$$f_xg_y-f_yg_x=0$$ and
$$(f_{xx}g_y+f_xg_{xy}-f_{xy}g_x-f_yg_{xx})f_y-(f_{xy}g_y+f_xg_{yy}-f_{yy}g_x-f_yg_{xy})f_x=0.$$
Let us note that the last equation contains terms $g_{xx}f_y^2$
and $g_{yy}f_x^2$ hence for $\nabla f\not=0$ these two equations
form a complete intersection. In general, if we omit the
assumption $\nabla f\not=0$ the set $S^2_1$ is given in $J^2(2)$
by three equations:
$$L_1:=f_xg_y-f_yg_x=0,$$
$$L_2:=(f_{xx}g_y+f_xg_{xy}-f_{xy}g_x-f_yg_{xx})f_y-(f_{xy}g_y+f_xg_{yy}-f_{yy}g_x-f_yg_{xy})f_x=0,$$
and
$$L_3:=(f_{xx}g_y+f_xg_{xy}-f_{xy}g_x-f_yg_{xx})g_y-(f_{xy}g_y+f_xg_{yy}-f_{yy}g_x-f_yg_{xy})g_x=0.$$
As above by symmetry the set $S^2_1$ is smooth and locally is
given as a complete intersection of either $L_1, L_2$ or $L_1,
L_3$.

We will denote by $J, J_{1,1}, J_{1,2}$ curves given by $L_1\circ
j^1(F)=0$, $L_2\circ j^2(F)=0$ and $L_3\circ j^2(F)=0$. We will also
identify these curves with their equations.

\begin{defi}\label{gen}
Let $F\in \Omega_2(d_1, d_2)$. We say that $F$ is generic if $F$
is proper, $j^1(F)\pitchfork S^1_1, j^2(F)\pitchfork S^2_1,$  and
additionally $j^1(F)\pitchfork S^1_2$.
\end{defi}

Again by Proposition \ref{generic} the subset of generic mappings
contains a Zariski open dense subset of $\Omega_2(d_1, d_2)$. Thus
a general mapping is generic.

\begin{defi}
Let $F : (\C^2,a)\to (\C^2,F(a))$ be a holomorphic mapping. We say
that $F$ has a simple cusp at $a$ if $F$ is biholomorphically
equivalent to the mapping $(\C^2,0)\ni (x,y)\mapsto (x, y^3+xy)\in
(\C^2,0)$.
\end{defi}

For the convenience of the reader we give a precise description of
singularities of a generic (in the sense of Definition \ref{gen})
plane mapping:

\begin{theo}
Let $F:\C^2\to\C^2$ be a generic polynomial mapping. Then $F$ has
only two-folds and simple cusps as singularities.
\end{theo}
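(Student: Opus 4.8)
The plan is to stratify the critical set by the Thom--Boardman data already prepared in the excerpt and to match each stratum with a standard normal form, using precisely the three transversality hypotheses in Definition \ref{gen}. First I would record the coarse consequences of genericity. Since $\codim S^1_2=4>2$ and $j^1(F)\pitchfork S^1_2$, the preimage $j^1(F)^{-1}(S^1_2)$ is empty, so $\operatorname{corank}d_aF\le 1$ for every $a$; hence every critical point has corank exactly one. Because $\codim S^1_1=1$ and $j^1(F)\pitchfork S^1_1$, the critical set $C(F)=S^1_1(F)=J$ is a smooth curve. Because $\codim S^2_1=2$ and $j^2(F)\pitchfork S^2_1$, the set $S^2_1(F)$ is a smooth $0$-dimensional subvariety, i.e. a finite set of points; in the notation of the excerpt it is the intersection $J\cap J_{1,1}$, and transversality means this intersection is transverse. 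Every critical point thus lies either in $C(F)\setminus S^2_1(F)$ or in $S^2_1(F)$, so it suffices to treat these two cases.

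Next I would pass to adapted local coordinates at a critical point $a$. Since $\operatorname{corank}d_aF=1$, one component has nonvanishing differential; say $\nabla f(a)\neq 0$. Taking $f$ as the first source coordinate and completing it to a coordinate system $(u,v)$, the map becomes $(u,v)\mapsto(u,G(u,v))$, with the first target coordinate again equal to $u$. Here $C(F)=\{G_v=0\}$, and the short computation with $f_x=1,f_y=0$ and vanishing second derivatives of $f$ identifies $L_2\circ j^2(F)$ with $-G_{vv}$, so $J_{1,1}=\{G_{vv}=0\}$. The kernel of $d_aF$ is the $v$-direction, which is tangent to $C(F)$ exactly when $G_{vv}(a)=0$; smoothness of $C(F)$ (i.e. $\nabla G_v(a)\neq 0$) then forces $G_{uv}(a)\neq 0$. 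This gives the dichotomy: a critical point is a fold candidate when $G_{vv}(a)\neq 0$ and a cusp candidate (a point of $S^2_1(F)$) when $G_{vv}(a)=0$.

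For $a\in C(F)\setminus S^2_1(F)$ one has $G_{vv}(a)\neq 0$, so $F|_{C(F)}$ is an immersion at $a$; then the germ of the discriminant at $F(a)$ is smooth, and by the criterion used in the proof of Theorem \ref{disc} (Corollary 1.11 of \cite{jel}) the germ $F_a$ is a two-fold (equivalently one checks directly that $(u,G)$ with $G_v(a)=0$, $G_{vv}(a)\neq 0$ is biholomorphic to $(x,y^2)$). For $a\in S^2_1(F)$ one has $G_v(a)=G_{vv}(a)=0$ and $G_{uv}(a)\neq 0$. The crucial point is that $j^2(F)\pitchfork S^2_1$ at $a$ is equivalent to independence of $\nabla(L_1\circ j^2F)=\nabla G_v$ and $\nabla(L_2\circ j^2F)=\nabla G_{vv}$ at $a$; since $\nabla G_v(a)=(G_{uv}(a),0)$ and $\nabla G_{vv}(a)=(G_{uvv}(a),G_{vvv}(a))$, this independence amounts to $G_{uv}(a)\,G_{vvv}(a)\neq 0$, hence to $G_{vvv}(a)\neq 0$. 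This is exactly the Whitney-cusp nondegeneracy: a germ $(u,G)$ with $G_v=G_{vv}=0$, $G_{uv}\neq 0$, $G_{vvv}\neq 0$ is biholomorphically equivalent to $(x,y^3+xy)$, a simple cusp. Since the two cases exhaust $C(F)$, the mapping $F$ has only two-folds and simple cusps.

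I expect the main obstacle to be the last equivalence: confirming that the \emph{second}-order transversality $j^2(F)\pitchfork S^2_1$ genuinely yields the \emph{third}-order condition $G_{vvv}(a)\neq 0$, so that no separate $S^3_1$ hypothesis is needed; this succeeds here only because $\codim S^3_1=3$ exceeds the source dimension, so the swallowtail stratum is automatically empty. After that, the remaining work is the explicit right--left reduction of $(u,G)$ to the model $(x,y^3+xy)$: a $u$-dependent shift in $v$ kills the $v^2$-term, a target shift absorbs the $u$-dependent constant term, and reparametrization plus finite determinacy of $A_2$ normalize the surviving $v^3$ and $uv$ terms. The fold case is routine; the cusp case requires this reduction, which I would either carry out by hand or quote from the Morin--Whitney normal-form theory.
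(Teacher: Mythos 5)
Your proposal is correct and follows essentially the same route as the paper: both use transversality to $S^1_2$ to force corank $\le 1$, split the smooth critical curve into fold points ($\rank d_aF|_{C(F)}=1$, handled via the smooth local discriminant and Corollary 1.11 of \cite{jel}, as in Theorem \ref{disc}) and points of $S^2_1(F)$, and at the latter extract from $j^2(F)\pitchfork S^2_1$ exactly the two nondegeneracy conditions (in the paper's coordinates, $g_{xy}(a)\neq 0$ and $g_{yyy}(a)\neq 0$). The only divergence is the final step: where you quote (or sketch) the Whitney--Morin normal form together with finite determinacy of the cusp, the paper carries out that reduction explicitly in the holomorphic category, via the Weierstrass Preparation Theorem, the Rouch\'e Theorem and Gunning--Rossi, followed by a Tschirnhaus-type substitution producing $(x_1,y_1^3+x_1y_1)$.
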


\begin{proof}
Since $F$ is generic, $\rank F\ge 1$. Since $j^1(F)\pitchfork
S^1_1$, the set $S^1_1(F)$ is smooth. Because $\rank F\ge 1$ we
have $S^1_1(F)=C(F)$ i.e. the set of critical values of $F$. For a
point $a\in C(F)$ we have two possibilities:

\begin{enumerate}
\item $\rank  d_a F|_{C(F)}=1$,
\item $\rank  d_a F|_{C(F)}=0$.
\end{enumerate}

\noindent In the case (1) we see that the discriminant
$F_a(C(F_a))$ of a germ $F_a$ is smooth and we can proceed as in
the proof of Theorem \ref{disc}. Hence $F_a$ is two-fold. Now
assume (2). We can assume that $F_a=(x, g(x,y))$. We have $L_1\circ
j^1(F_a)={g_y}(a)=0$ and $L_2\circ j^2(F)=g_{yy}(a)=0$. Since $F$
is a generic mapping, the curves $g_y=0$ and $g_{yy}=0$ intersect
transversally at $a$. In particular we have
$$\det \left[ \begin{array}{ccccccccc}
            g_{yx}(a) & g_{yy}(a) \\
            g_{yyx}(a) & g_{yyy}(a) \\
            \end{array}\right] \not=0.$$
Since $g_{yy}(a)=0$ it is easy to see that
$g_{yyy}(a)\not=0$ and $g_{xy}(a)\not=0$. We can assume that
$a=(0,0)$ and $g(0,0)=0$. By the Weierstrass Preparation Theorem
we have $$g(x,y)=v(x,y)(y^3+a_1(x)y^2+a_2(x)y+a_3(x)),$$ where $v,
a_i$ are holomorphic and $v(0,0)\not=0$, $a_i(0)=0$ for $i=1,2,3$.
By the Rouche Theorem we see that $\mu(F_a)=\mu((x,
y^3+a_1(x)y^2+a_2(x)y+a_3(x)))=3$ (here $\mu(F)$ denotes the
topological degree of the mapping $F$). Hence $F_a$ is a local
analytic covering of degree~$3$. By \cite{gun}, Theorem 12, p.
104, we have
$$(*)\quad y^3+b_1(x,g)y^2+b_2(x,g)y+b_3(x,g)=0.$$
Here $b_i$ are holomorphic
and $b_i(0,0)=0$. Now we  follow \cite{gg}, p. 148. The equality
$(*)$ can be rewritten as
$$(**)\quad (y+b_1(x,g)/3)^3+c(x,g)(y+b_1(x,g)/3)+d(x,g)=0,$$
where $c,d$ are
holomorphic and $c(0,0)=d(0,0)=0.$ Since $g(0,y)=ay^3+\ldots$, we
have from $(**)$ that $\frac{\partial{d}}{\partial y}(0,0)\not=0$.
The inequality $\frac{\partial{g}^2}{\partial x\partial
y}(0,0)\not=0$ implies $\frac{\partial c}{\partial x}(0,0)\not=0$.
Moreover $(**)$ implies $\frac{\partial{d}}{\partial x}(0,0)=0$.
Take the coordinates
$$x_1=c(x,g), \ y_1=y+b_1(x,g)/3~~~~~~{\rm and} ~~~~x_2=c(x,y), \ y_2= -d(x,y).$$
We have $F_a^*(x_2,y_2)=(c(x,g), -d(x,g))=(c(x,g),
(y+b_1(x,g)/3)^3+c(x,g)(y+b_1(x,g)/3))=(x_1, y_1^3+x_1y_1)$.
\end{proof}

Now we compute the number of cusps of a general polynomial mapping
$F\in \Omega_2(d_1,d_2)$. To do this we need the series of lemmas:

\begin{lem}\label{infty0}
Let $L_\infty$ denote the line at infinity of
$\C^2$. There is a non-empty open subset $V\subset
\Omega_2(d_1,d_2)$ such that for all $(f,g)\in V:$

\begin{enumerate}
\item $\left\{\frac{\partial f}{\partial x}=0\right\}\pitchfork \left\{\frac{\partial
f}{\partial y}=0\right\}$,
\item $\overline{\left\{\frac{\partial f}{\partial x}=0\right\}}\cap \overline{\left\{\frac{\partial
f}{\partial y}=0\right\}}\cap L_\infty=\emptyset$.
\end{enumerate}

\end{lem}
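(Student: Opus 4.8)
The plan is to observe that both conditions (1) and (2) constrain only the first factor $f$ of the pair $(f,g)$. Writing $\Omega_2(d_1,d_2)=P_1\times P_2$, where $P_i$ is the (affine, hence irreducible) space of polynomials of degree $\le d_i$, it suffices to produce a non-empty Zariski open set $V'\subset P_1$ on which (1) and (2) both hold, and then to set $V=V'\times P_2$. Since $P_1$ is irreducible, it is enough to exhibit (1) and (2) separately as non-empty Zariski open conditions on $f$; their intersection is then automatically non-empty and open.

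For condition (2) I would pass to the leading forms. Let $f_{d_1}$ denote the degree-$d_1$ homogeneous part of $f$; for generic $f$ we have $\deg f=d_1$. The projective closure $\overline{\{f_x=0\}}$ meets $L_\infty\cong\mathbb{P}^1$ exactly in the zero set of the top-degree part $\partial f_{d_1}/\partial x$, and similarly for $f_y$. Hence the left-hand side of (2) is the set of common zeros in $\mathbb{P}^1$ of the two binary forms $\partial f_{d_1}/\partial x$ and $\partial f_{d_1}/\partial y$. By the Euler relation $d_1 f_{d_1}=x\,\partial f_{d_1}/\partial x+y\,\partial f_{d_1}/\partial y$, these common zeros are precisely the multiple roots of $f_{d_1}$; thus (2) holds if and only if the discriminant of $f_{d_1}$ is non-zero. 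This is a non-empty Zariski open condition on the leading coefficients of $f$ (non-empty because, for instance, $f_{d_1}=x^{d_1}+y^{d_1}$ is square-free), and it simultaneously guarantees that $\partial f_{d_1}/\partial x$ and $\partial f_{d_1}/\partial y$ do not vanish identically, so that the reduction to leading forms is valid.

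For condition (1), note that $\{f_x=0\}$ and $\{f_y=0\}$ meet transversally at a point $p$ if and only if both are smooth there with distinct tangents, which is equivalent to the non-vanishing of the Jacobian of $(f_x,f_y)$, i.e.\ of the Hessian $\det\operatorname{Hess}(f)=f_{xx}f_{yy}-f_{xy}^2$, at $p$. Thus (1) says exactly that $0$ is a regular value of the gradient map $\nabla f=(f_x,f_y)$, i.e.\ that $f$ is a Morse polynomial. I would prove genericity by a dimension count on the incidence variety
$$I=\{(f,p)\in P_1\times\C^2 : f_x(p)=f_y(p)=0,\ \det\operatorname{Hess}(f)(p)=0\}.$$
For each fixed $p$ the three defining equations are independent conditions on the coefficients of $f$: the functionals $f\mapsto f_x(p)$ and $f\mapsto f_y(p)$ are independent linear forms (the coefficient of $x$ in $f$ enters the first with weight $1$ and the second with weight $0$, and vice versa for the coefficient of $y$), while the Hessian equation cuts down by one more through the quadratic coefficients. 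Hence the fibre of $I\to\C^2$ has codimension $3$ in $P_1$, so $\dim I\le\dim P_1-3+2<\dim P_1$. The image of the projection $I\to P_1$ is therefore a constructible set of dimension strictly less than $\dim P_1$, and its complement is a non-empty Zariski open subset $V_1\subset P_1$ on which (1) holds. (Equivalently, one may derive the same conclusion from the parametrized transversality argument of Proposition \ref{generic}, applied to the evaluation map $(f,x)\mapsto j^0(\nabla f)(x)$, which is a submersion since the value $\nabla f(x)$ is controlled by the linear coefficients of $f$.)

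The main obstacle is the dimension count in condition (1): one must verify that the constraints $f_x(p)=f_y(p)=0$ and $\det\operatorname{Hess}(f)(p)=0$ are genuinely independent at \emph{every} $p$, so that the fibre codimension is exactly $3$ and not smaller, and then that integrating over the two-dimensional family of points $p$ still leaves $\dim I$ strictly below $\dim P_1$. Once both $V_1$ (from (1)) and the discriminant-open set $V_2$ (from (2)) are known to be non-empty and Zariski open in the irreducible space $P_1$, their intersection $V'=V_1\cap V_2$ is non-empty and open, and $V=V'\times P_2$ is the desired subset of $\Omega_2(d_1,d_2)$.
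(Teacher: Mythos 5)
Your proposal is correct, and its two halves have rather different relations to the paper's own proof. For part (1) you are in substance doing what the paper does: the paper applies Proposition \ref{generic} to the smooth codimension-two set $S=\{f_x=f_y=0\}\subset J^1(2)$, and transversality of $j^1(F)$ to $S$ is precisely your Morse condition $\det\operatorname{Hess}(f)(p)\neq 0$ at every critical point $p$; your incidence-variety count is the same dimension argument carried out by hand, as you yourself note in your parenthetical remark. For part (2), however, your route is genuinely different. The paper also reduces to the leading form $f_{d_1}\in H_{d_1}$, but then shows that the evaluation map $\Psi(f,x,y)=\bigl(\partial_x f(x,y),\partial_y f(x,y)\bigr)$ is a submersion on $H_{d_1}\times(\C^2\setminus\{0\})$, concludes that for generic $f$ the common zero locus of the two partials is zero-dimensional or empty, and uses homogeneity (the zero locus is a cone) to exclude the zero-dimensional case. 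You instead identify the common zeros at infinity, via the Euler relation, with the multiple roots of the binary form $f_{d_1}$, so that (2) becomes the explicit open condition $\operatorname{Disc}(f_{d_1})\neq 0$. Your version buys an explicit description of the open set (square-freeness of the leading form, witnessed by $x^{d_1}+y^{d_1}$) with no transversality machinery at all, while the paper's version is uniform with the transversality arguments used throughout the rest of the paper and does not depend on having a classical discriminant available. Two small points to tidy up: the complement of a constructible set of smaller dimension need not itself be open---you should take the complement of its Zariski closure; and the step ``the Hessian equation cuts down by one more through the quadratic coefficients'' presupposes $d_1\ge 2$ (for $d_1=1$ the Hessian vanishes identically and the fibre codimension is only $2$), so the case $d_1=1$, where both conditions hold vacuously for generic $f$, should be set aside first, exactly as the paper does.
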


\begin{proof}
The case $d_1=1$ is trivial so assume $d_1>1$. Let us note that
the set $S\subset J^1(2)$ given by $\{ f_x=f_y=0\}$ is smooth.
Hence (1) follows from Proposition \ref{generic}. To prove (2) it is
enough to assume that $f\in H_d$, where $H_d$ denotes the set of
homogenous polynomials of two variables of degree $d$.  Let $\Psi
: H_d\times (\C\times \C)\setminus \{0,0\}\ni (f, x,y)\mapsto
(\frac{\partial f}{\partial x}(x,y), \frac{\partial f}{\partial
y}(x,y))\in \C^2$. It is easy to see that $\Psi$ is a submersion.
Indeed, if $f=\sum a_i x^{d-i}y^i$ then $f_x:=\frac{\partial
f}{\partial x}(x,y)=da_0x^{d-1}+\ldots+a_{d-1}y^{d-1},
f_y:=\frac{\partial f}{\partial
y}(x,y)=a_1x^{d-1}+\ldots+da_dy^{d-1}$. Since $(x,y)\not=(0,0)$ we
can assume by symmetry that $y\not=0$. Now
$\pa{f_x}{a_{d-1}}=y^{d-1}, \pa{f_x}{a_d}=0,
\pa{f_y}{a_d}=dy^{d-1}$. Thus
$\pa{(f_x,f_y)}{(a_{d-1},a_d)}=dy^{2(d-1)}\not=0$.

Hence  for a general polynomial $f\in H_d$ the mapping $\Psi_f:
 (\C\times \C)\setminus \{0,0\}\ni (x,y)\mapsto (\frac{\partial f}{\partial
x}(x,y), \frac{\partial f}{\partial y}(x,y))\in \C^2$ is
transversal to the point $(0,0)$. In particular $\Psi_f^{-1}(0,0)$
is either zero-dimensional or the empty set. Since $f$ is a
homogenous polynomial the first possibility is excluded. This
means that  $\overline{\left\{\frac{\partial f}{\partial
x}=0\right\}}\cap \overline{\left\{\frac{\partial f}{\partial
y}=0\right\}}\cap L_\infty=\emptyset.$
\end{proof}

\begin{lem}\label{infty}
Let $L_\infty$ denote the line at infinity of
$\C^2$. There is a non-empty open subset $V\subset \Omega_2(d_1,d_2)$ such
that for all $F=(f,g)\in V$:

\begin{enumerate}
\item $\overline{J(F)}\cap \overline{J_{1,1}(F)}\cap L_\infty=\emptyset$,
\item $\overline{J(F)}\pitchfork L_\infty$.
\end{enumerate}
\end{lem}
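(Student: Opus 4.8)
The plan is to reduce both assertions to statements about the top-degree homogeneous parts, since the intersection of a plane curve with $L_\infty$ is governed solely by its leading form. Write $f=f_{d_1}+\cdots$ and $g=g_{d_2}+\cdots$ for the decompositions into homogeneous components. As $L_1\circ j^1(F)=f_xg_y-f_yg_x$ has degree $\delta_1:=d_1+d_2-2$, its leading form is the Jacobian $P:=\mathrm{Jac}(f_{d_1},g_{d_2})=(f_{d_1})_x(g_{d_2})_y-(f_{d_1})_y(g_{d_2})_x$, so $\overline{J(F)}\cap L_\infty$ is the zero set of $P$ in $L_\infty\cong\mathbb{P}^1$. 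Using the identity $L_2=(L_1)_xf_y-(L_1)_yf_x$ (read off from the formulas for $L_1,L_2$), the leading form of $L_2\circ j^2(F)$, of degree $2d_1+d_2-4$, is $Q:=\mathrm{Jac}(P,f_{d_1})$, so $\overline{J_{1,1}(F)}\cap L_\infty=\{Q=0\}$. A simple root $p$ of $P$ gives $\nabla P(p)\neq0$, hence $\overline{J(F)}$ is smooth at the corresponding point of $L_\infty$ and meets $L_\infty$ transversally there; thus assertion (2) is equivalent to $P$ being squarefree, while assertion (1) is equivalent to $P$ and $Q$ having no common zero on $\mathbb{P}^1$.

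For (1) I would argue via the Euler relations. Suppose $P$ is squarefree and $p=[x_0:y_0]$ is a common zero of $P$ and $Q$. Euler's identity $x_0(P)_x(p)+y_0(P)_y(p)=\delta_1P(p)=0$ together with $\nabla P(p)\neq0$ gives $\nabla P(p)\parallel(-y_0,x_0)$. Since $Q(p)=\mathrm{Jac}(P,f_{d_1})(p)=0$ forces $\nabla f_{d_1}(p)\parallel\nabla P(p)$, we get $x_0(f_{d_1})_x(p)+y_0(f_{d_1})_y(p)=0$, i.e. $f_{d_1}(p)=0$ by Euler. Applying Euler once more at this zero of $f_{d_1}$ (assuming $f_{d_1}$ squarefree, so $\nabla f_{d_1}(p)\neq0$) gives $\nabla f_{d_1}(p)=\mu(-y_0,x_0)$ with $\mu\neq0$, and substituting into $0=P(p)=\mathrm{Jac}(f_{d_1},g_{d_2})(p)=-\mu\,d_2\,g_{d_2}(p)$ yields $g_{d_2}(p)=0$. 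Hence any common zero of $P$ and $Q$ is a common zero of $f_{d_1}$ and $g_{d_2}$, so (1) holds as soon as $f_{d_1}$ is squarefree, $P$ is squarefree, and $\gcd(f_{d_1},g_{d_2})=1$.

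It then remains to produce a nonempty open $V$ on which these three conditions hold. Each of ``$f_{d_1}$ squarefree'', ``$\gcd(f_{d_1},g_{d_2})=1$'' and ``$P=\mathrm{Jac}(f_{d_1},g_{d_2})$ squarefree'' is the non-vanishing of a discriminant or resultant in the coefficients of the leading forms, hence defines a Zariski-open subset of $\Omega_2(d_1,d_2)$; since $\Omega_2(d_1,d_2)$ is irreducible, their common locus is nonempty provided each is separately nonempty, and that locus serves as $V$. The first two conditions are obviously realizable.

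The main obstacle is the third condition, namely exhibiting one pair $(f_{d_1},g_{d_2})$ with squarefree Jacobian; naive symmetric choices fail, since e.g. $\mathrm{Jac}(x^{d_1},y^{d_2})=d_1d_2\,x^{d_1-1}y^{d_2-1}$ is far from squarefree. To handle it I would dehomogenize at $y=1$, where Euler gives $P(x,1)=d_2f'(x)g(x)-d_1f(x)g'(x)$, whose roots away from those of $f,g$ are precisely the critical points of the rational function $f^{d_2}/g^{d_1}\colon\mathbb{P}^1\to\mathbb{P}^1$. For $\gcd(f,g)=1$ this map has degree $d_1d_2$, and a Riemann--Hurwitz count leaves exactly $d_1+d_2-2=\deg P$ free critical points; choosing $(f,g)$ with only simple branching (the generic situation for a branched cover, with $[1:0]$ unramified) makes all these simple, so $P$ is squarefree. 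Alternatively one can mimic the submersion argument of Lemma \ref{infty0}, checking that $(f_{d_1},g_{d_2},(x,y))\mapsto\nabla P(x,y)$ is a submersion on $H_{d_1}\times H_{d_2}\times(\mathbb{C}^2\setminus\{0\})$, so that a generic $P$ has no common zero of $(P)_x,(P)_y$, i.e. no repeated root. Either route yields nonemptiness and completes the proof.
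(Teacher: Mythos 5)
Your reduction to leading forms and the Euler-identity argument are sound: it is true that $\overline{J(F)}\cap L_\infty=\{P=0\}$ and (when $Q\not\equiv 0$) $\overline{J_{1,1}(F)}\cap L_\infty=\{Q=0\}$ with $P=\mathrm{Jac}(f_{d_1},g_{d_2})$, $Q=\mathrm{Jac}(P,f_{d_1})$, and your computation showing that a common zero of $P$ and $Q$ must be a common zero of $f_{d_1}$ and $g_{d_2}$ is correct (modulo two small points: you must rule out $\nabla f_{d_1}(p)=0$ before concluding $\nabla f_{d_1}(p)\parallel\nabla P(p)$, which squarefreeness of $f_{d_1}$ does, and you must check $Q\not\equiv 0$ so that $Q$ really is the leading form of $J_{1,1}$; this also follows from your three hypotheses, but should be said). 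This is genuinely different from the paper, which never isolates explicit open conditions on the leading forms but instead bounds the dimension of the incidence variety $X=\{(p,F):J(F)(p)=J_{1,1}(F)(p)=0\}\subset\mathbb{P}^1\times H_{d_1,d_2}$ by an explicit coefficient computation, and deduces (2) from (1) via the identity $J_{1,1}=J_xf_y-J_yf_x$. However, your proof has a genuine gap exactly at what you call the main obstacle: neither of your two routes actually proves that some pair $(f_{d_1},g_{d_2})$ has squarefree Jacobian.

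Route A cannot invoke ``the generic situation for a branched cover'': the maps $\phi=f^{d_2}/g^{d_1}$ form a very special subfamily of the rational maps of degree $d_1d_2$, in which every member is forced to have ramification index $d_2$ at all points over $0$ and $d_1$ at all points over $\infty$. Genericity of simple branching among all degree-$d_1d_2$ covers says nothing about this constrained family; that the remaining $d_1+d_2-2$ critical points can be made simple \emph{within the family} is precisely what needs proof. Route B is false as stated: $\Psi\colon(f_{d_1},g_{d_2},(x,y))\mapsto\nabla P(x,y)$ is bilinear in $(f_{d_1},g_{d_2})$ and is \emph{not} a submersion along $\Psi^{-1}(0)$ --- e.g.\ at $\bigl((f_{d_1},g_{d_2}),(x,y)\bigr)=\bigl((x^{d_1},x^{d_2}),(0,1)\bigr)$ with $d_1,d_2\ge 3$ (or at the pair $(0,0)$) one has $P\equiv 0$ while every partial derivative of $\Psi$, in coefficient directions as well as in $(x,y)$, vanishes. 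So parametric transversality does not apply on the whole parameter space; repairing this requires excising a degenerate locus and bounding its codimension separately, i.e.\ exactly the incidence-variety computation of the paper that you are trying to bypass (this is in fact how the paper handles its own bad locus $\{a_0=0\}$). A clean way to close the gap within your framework: show that the bilinear map $\mathrm{Jac}\colon H_{d_1}\times H_{d_2}\to H_{d_1+d_2-2}$ is dominant. Its differential at $(x^{d_1},y^{d_2})$ is $(\delta f,\delta g)\mapsto d_2y^{d_2-1}(\delta f)_x+d_1x^{d_1-1}(\delta g)_y$, whose image contains $y^{d_2-1}H_{d_1-1}+x^{d_1-1}H_{d_2-1}=H_{d_1+d_2-2}$; hence the image of $\mathrm{Jac}$ contains a dense open set and therefore meets the dense open set of squarefree forms, giving the required pair.
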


\begin{proof}
Since the case $d_1=d_2=1$ is trivial and the assertion does not depend on replacing $(f,g)$
with $(g,f)$ we may assume that $d_2>1$.
We consider the (general) case when $\deg f=d_1$ and $\deg g=d_2$.
Hence $\overline{J(F)}\cap L_\infty$ and $\overline{J_{1,1}(F)}\cap L_\infty$ depend only on the homogeneous parts of $f$ and $g$ of degree
$d_1$ and $d_2$ respectively. Let $H_d$ denote the set of
homogeneous polynomials of degree $d$ in two variables. It is
sufficient to show that there is an open subset $V\subset
H_{d_1,d_2}:=H_{d_1}\times H_{d_2}$ such that $\overline{J(F)}\cap \overline{J_{1,1}(F)}\cap L_\infty=\emptyset$ for all $F=(f,g)\in V$.

Consider the set $X=\left\{(p,F)\in\mathbb{P}^1\times H_{d_1,d_2}\
:\ J(F)(p)=J_{1,1}(F)(p)=0\right\}$. Note that $X$ is a
closed subset of $\mathbb{P}^1\times H_{d_1,d_2}$, and if
$\overline{J(F)}\cap \overline{J_{1,1}(F)}\cap L_\infty\neq\emptyset$ then
$F$ belongs to the image of the projection of $X$ on
$H_{d_1,d_2}$. So to prove (1) it is sufficient to show that $X$
has dimension strictly smaller than the dimension of
$H_{d_1,d_2}$.

Let $q=(1:0)\in \mathbb{P}^1$, $Y:=\{q\}\times
H_{d_1,d_2}$ and $X_0=X\cap Y$. Note that all fibers of the projection
$X\rightarrow\mathbb{P}^1$ are isomorphic to $X_0$. Thus
$\dim(X)=\dim(X_0)+\dim(\mathbb{P}^1)$ and to prove (1) it is sufficient to
show that $X_0$ has codimension at least $2$ in $Y$.

Let $p=(q,F)\in Y$ and let $a_i$ and $b_i$ be the parameters in $H_{d_1,d_2}$ giving
respectively the coefficients of $f$ at $x_1^{d_1-i}x_2^i$ and
of $g$ at $x_1^{d_2-i}x_2^i$. For $0\leq i+j\leq d_1$, we have
$\frac{\partial^{i+j}f}{\partial
x_1^ix_2^j}(q)=\frac{(d_1-j)!j!}{(d_1-i-j)!}a_j(F)$ and similarly
for $g$ and $b_j$.

To conclude the proof of (1) we will show that the codimension of $\{a_0=0\}\cap X_0$ in $Y$ is at least $2$ and $\nabla J$ and $\nabla J_{1,1}$ are
linearly independent outside $\{a_0=0\}\cap X_0$ and thus the variety $X_0$ has
codimension $2$ in $Y$.

Let us calculate $J(p)$. We have $J(p)=(f_xg_y-f_yg_x)(q,F)=(d_1a_0b_1-d_2a_1b_0)(F)$.
Thus $\{a_0=0\}\cap X_0\subset \{a_0=a_1b_0=0\}\cap Y$ has codimension at least $2$ and we may assume $a_0(F)\neq 0$ in further calculations.

We have $\pa{J}{b_1}(p)=\pa{d_1a_0b_1-d_2a_1b_0}{b_1}(F)=d_1a_0(F)$ and $\frac{\partial J(p)}{\partial b_2}=0$.
Now let us calculate $\pa{J_{1,1}}{b_2}(p)$. The
coefficient $b_2$ can only be obtained from $\frac{\partial^2
g}{\partial x_2^2}$, which is present in $J_{1,1}$ in the
summand $-2\frac{\partial^2 g}{\partial
x_2^2}(d_1\pa{f}{x_1})^2$. Thus
$\pa{J_{1,1}}{b_2}(p)=\pa{(-2d_1^2b_2a_0^2)}{b_2}(F)=-2(d_1a_0(F))^2$.
So $\det\pa{(J,J_{1,1})}{(b_1,b_2)}(p)=-2(d_1a_0(F))^3\neq 0$.

To prove (2) note that $\overline{\big\{\pa{J}{x}(F)=0\big\}}\cap\overline{\big\{\pa{J}{y}(F)=0\big\}}\subset \overline{J_{1,1}(F)}$, hence (1) implies (2).
\end{proof}

\begin{lem}\label{infty1}
There is a non-empty open subset $V_1\subset \Omega_2(d_1,d_2)$
such that for all $(f,g)\in V_1$ and every $a\in \C^2$: if
$\frac{\partial{f}}{\partial{x}}(a)=0$ and
$\frac{\partial{f}}{\partial{y}}(a)=0$, then
$\frac{\partial{g}}{\partial{x}}(a)\not=0$ and
$\frac{\partial{g}}{\partial{y}}(a)\not=0$.
\end{lem}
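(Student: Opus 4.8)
The plan is to deduce the lemma from a transversality statement in the $1$-jet space, exactly in the spirit of the proof of Lemma~\ref{infty0}. All the conditions appearing in the statement involve only first-order derivatives, so it suffices to work in $J^1(2)$ with its coordinates $(x,y,f,g,f_x,f_y,g_x,g_y)$. I would introduce the two coordinate subspaces
\[
S=\{f_x=f_y=g_x=0\}\subset J^1(2),\qquad S'=\{f_x=f_y=g_y=0\}\subset J^1(2).
\]
Each is cut out by three of the fibre coordinates, hence each is a smooth algebraic submanifold of $J^1(2)$ of codimension $3$.

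Next I would apply Proposition~\ref{generic} with $q=1$ (legitimate since $d_1,d_2\ge 1$) to the pair $S,S'$. This produces a Zariski open dense subset $V_1:=\Omega_2(d_1,d_2)(S,S')\subset\Omega_2(d_1,d_2)$ such that for every $F=(f,g)\in V_1$ one has $j^1(F)\pitchfork S$ and $j^1(F)\pitchfork S'$. The crucial point is then a dimension count: the source $\C^2$ has dimension $2$, while $\codim S=\codim S'=3$. Since $2<3$, transversality of $j^1(F)$ to $S$ forces $j^1(F)^{-1}(S)=\emptyset$, because a nonempty transverse preimage would be a submanifold of $\C^2$ of codimension $3$. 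The identical argument gives $j^1(F)^{-1}(S')=\emptyset$.

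Finally I would translate these two emptiness statements back into the assertion of the lemma. That $j^1(F)^{-1}(S)=\emptyset$ means precisely that there is no $a\in\C^2$ with $f_x(a)=f_y(a)=0$ and $g_x(a)=0$; equivalently, whenever $f_x(a)=f_y(a)=0$ one must have $g_x(a)\neq 0$. Likewise $j^1(F)^{-1}(S')=\emptyset$ yields $g_y(a)\neq 0$ at every such $a$. Together these give the conclusion for all $F\in V_1$. I expect no genuinely hard step here: the only things to verify are the smoothness of the coordinate subspaces $S,S'$ and the codimension bookkeeping, both of which are immediate, while the real content is carried entirely by the already-established Proposition~\ref{generic}.
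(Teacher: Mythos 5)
Your proposal is correct and is essentially identical to the paper's proof: the paper also considers the codimension-three subsets $S_1=\{f_x=f_y=g_x=0\}$ and $S_2=\{f_x=f_y=g_y=0\}$ of $J^1(2)$, applies Proposition~\ref{generic} to obtain transversality of $j^1(F)$ to both, and concludes that the image of $j^1(F)$ must be disjoint from them since the source has dimension $2<3$. The only difference is cosmetic (you spell out the $q=1$ hypothesis check and the final translation back to the statement, which the paper leaves implicit).
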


\begin{proof}
Let us consider two subsets in $J^1(2)$: $S_1:=\{
(x,y,f,g,f_x,f_y,g_x,g_y): f_x=0,f_y=0, g_x=0\}$ and $S_2:=\{
(x,y,f,g,f_x,f_y,g_x,g_y): f_x=0,f_y=0, g_y=0\}$. By Proposition
\ref{generic} there is a non-empty open subset $V_1\subset
\Omega_2(d_1,d_2)$ such that for every $F\in V_1$ the mapping
$j^1(F)$ is transversal to $S_1$ and $S_2$. Since these subsets
have codimension three, we see that the image of $j^1(F)$ is
disjoint with $S_1$ and $S_2$.
\end{proof}

\begin{lem}\label{infty3}
There is a non-empty open subset $V_2\subset \Omega_2(d_1,d_2)$
such that for all $(f,g)\in V_2$ we have
$\big\{\frac{\partial{f}}{\partial{x}}=0\big\} \cap
\big\{\frac{\partial{f}}{\partial{y}}=0\big\}\cap J_{1,2}(f,g)=\emptyset$.
\end{lem}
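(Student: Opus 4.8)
The plan is to recognize the condition as avoidance of a subvariety in the $2$-jet space $J^2(2)$ and then to invoke Proposition \ref{generic}. First I would substitute $f_x=f_y=0$ into the defining expression for $L_3$; all terms carrying a factor $f_x$ or $f_y$ drop out and $L_3$ collapses to the quadratic form
$$Q:=f_{xx}g_y^2-2f_{xy}g_xg_y+f_{yy}g_x^2.$$
Hence a point $a\in\C^2$ belongs to $\big\{f_x=0\big\}\cap\big\{f_y=0\big\}\cap J_{1,2}(f,g)$ if and only if $j^2(F)(a)$ lies in the set
$$S:=\big\{\,\sigma\in J^2(2)\ :\ f_x=0,\ f_y=0,\ Q=0\,\big\},$$
so it suffices to produce a non-empty open $V_2\subset\Omega_2(d_1,d_2)$ on which $j^2(F)$ misses $S$. (We may assume $d_1\ge 2$; if $d_1=1$ then $Q\equiv 0$ and $\{f_x=0\}\cap\{f_y=0\}=\emptyset$ for a general $f$, so the statement is trivial.)

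Next I would dissect $S$ along the locus $(g_x,g_y)=(0,0)$. On $S'':=S\cap\{g_x=g_y=0\}$ the form $Q$ vanishes identically, so $S''=\{f_x=f_y=g_x=g_y=0\}$ is a coordinate subspace of codimension $4$; in particular $S''\subset\{f_x=f_y=g_x=0\}$, which a generic $j^1(F)$ already avoids by (the proof of) Lemma \ref{infty1}. On the complementary locus $S':=S\cap\{(g_x,g_y)\neq(0,0)\}$ I claim $S'$ is a smooth locally closed subvariety of codimension exactly $3$. Indeed, the gradients of the three defining functions in the coordinates of $J^2(2)$ are $\nabla f_x=\partial_{f_x}$, $\nabla f_y=\partial_{f_y}$, and $\nabla Q$, whose components along the second-order directions are
$$\big(\partial_{f_{xx}}Q,\ \partial_{f_{xy}}Q,\ \partial_{f_{yy}}Q\big)=\big(g_y^2,\ -2g_xg_y,\ g_x^2\big).$$
This vector is nonzero precisely because $(g_x,g_y)\neq(0,0)$, and since it lies in the span of the $f_{xx},f_{xy},f_{yy}$-directions it is independent of $\partial_{f_x}$ and $\partial_{f_y}$. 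The three gradients are therefore linearly independent everywhere on $S'$, so $S'$ is smooth of codimension $3$.

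Finally I would apply Proposition \ref{generic} (with $q=2$) to the smooth submanifold $S'$, obtaining a Zariski open dense set of $F$ with $j^2(F)\pitchfork S'$; since $\codim S'=3>2=\dim\C^2$, transversality of a map from a surface forces $j^2(F)(\C^2)\cap S'=\emptyset$. Combining this with the avoidance of $S''$ (either from Lemma \ref{infty1} or from a second application of Proposition \ref{generic} to the codimension-$4$ subspace $S''$) and intersecting the resulting open dense sets yields a non-empty open $V_2$ on which $j^2(F)$ misses all of $S=S'\cup S''$, which is exactly the asserted disjointness. The crux of the whole argument — and the only step requiring care — is the smoothness and codimension count for $S'$: the quadratic-form equation is a genuinely independent condition only away from $(g_x,g_y)=(0,0)$, and this is precisely why the degenerate locus must be split off and handled separately through Lemma \ref{infty1}.
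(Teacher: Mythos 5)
Your proof is correct, and at its core it is the same argument as the paper's: realize the bad locus as a smooth locally closed algebraic subvariety of $J^2(2)$ of codimension $3$, apply Proposition \ref{generic}, and use that transversality of $j^2(F)$ defined on a surface to a codimension-$3$ manifold forces avoidance. The difference is in how the degenerate locus is split off. The paper takes $S=\{f_x=f_y=L_3=0\}\cap\{g_x\neq 0,\ g_y\neq 0\}$, asserts without computation that it is a smooth complete intersection of codimension $3$, and says nothing explicit about points where $g_x=0$ or $g_y=0$; those are implicitly excluded by Lemma \ref{infty1} (which must be intersected into $V_2$ for the conclusion to follow). You instead excise only the smaller corner $\{g_x=g_y=0\}$: on its complement you verify smoothness of $S'$ by the explicit gradient computation $\bigl(\partial_{f_{xx}}Q,\partial_{f_{xy}}Q,\partial_{f_{yy}}Q\bigr)=\bigl(g_y^2,-2g_xg_y,g_x^2\bigr)$, and the remainder $S''=\{f_x=f_y=g_x=g_y=0\}$ has codimension $4$ and is handled either by Lemma \ref{infty1} or by a second transversality application. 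Your reduction of $L_3$ modulo $(f_x,f_y)$ to the quadratic form $Q=f_{xx}g_y^2-2f_{xy}g_xg_y+f_{yy}g_x^2$ is precisely the reason the paper's "easy to check" smoothness claim holds, so your write-up supplies the verification the paper omits; and your decomposition is slightly more economical, since transversality to the paper's $S$ alone does not rule out points with, say, $f_x=f_y=g_x=0$ but $g_y\neq 0$, whereas your $S'$ covers them. Both routes end in the same place; yours is the more self-contained rendering of the same idea.
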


\begin{proof}
Let us consider the (non-closed) subvariety $S\subset J^2(2)$
given by equations: $f_x=0$, $f_y=0$,
$(f_{xx}g_y+f_xg_{xy}-f_{xy}g_x-f_yg_{xx})g_y-(f_{xy}g_y+f_xg_{yy}-f_{yy}g_x-f_yg_{xy})g_x=0$,
$g_x\not=0$, $g_y\not=0$. It is easy to check that $S$ is a smooth
complete intersection and it has codimension three. The set of
generic mappings $F$ which are transversal to $S$ contains a
Zariski open dense subset $V_2\subset \Omega_2(d_1,d_2)$. By
construction for all $(f,g)\in V_2$ we have
$\big\{\frac{\partial{f}}{\partial{x}}=0\big\} \cap
\big\{\frac{\partial{f}}{\partial{y}}=0\big\}\cap J_{1,2}(f,g)=\emptyset$.
\end{proof}

\begin{lem}\label{infty4}
There is a non-empty open subset $V_3\subset \Omega_2(d_1,d_2)$
such that for all $(f,g)\in V_3$ the curve $J(f,g)$ is transversal
to the curve $J_{1,1}(f,g)$.
\end{lem}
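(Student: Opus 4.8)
The plan is to reduce transversality of the two plane curves $J=\{L_1\circ j^1(F)=0\}$ and $J_{1,1}=\{L_2\circ j^2(F)=0\}$ to a pointwise condition at their common points, and to split those common points according to whether $\nabla f$ vanishes. A common point $a$ satisfies $L_1=L_2=0$, and transversality of the curves at $a$ means that $\nabla J$ and $\nabla J_{1,1}$ are linearly independent there. I would take $V_3$ to be the intersection of the set of generic mappings (Definition \ref{gen}) with the open sets produced by Lemmas \ref{infty0} and \ref{infty3}; this is again a non-empty open set, and I claim that transversality holds for every $F\in V_3$.

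First, consider a common point $a$ with $\nabla f(a)\neq 0$. On the chart $\{\nabla f\neq 0\}$ the variety $S^2_1$ is the smooth complete intersection $\{L_1=L_2=0\}$; moreover on $C(F)=\{L_1=0\}$ the gradients of $f$ and $g$ are proportional, so $L_3=\lambda L_2$ there and hence $\{L_1=L_2=0\}=S^2_1(F)$ on this chart. Thus $a$ is an honest cusp. Since $F$ is generic we have $j^2(F)\pitchfork S^2_1$, and unwinding this transversality (the normal directions of $S^2_1$ are $dL_1,dL_2$, and the source is $2$-dimensional) shows that $a\mapsto (J(a),J_{1,1}(a))$ is a submersion at $a$, i.e. $\nabla J$ and $\nabla J_{1,1}$ are independent. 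This is exactly transversality of the two curves at $a$.

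Next, consider a common point $a$ with $\nabla f(a)=0$; such a point lies automatically on both curves, since $f_x=f_y=0$ annihilates every term of $L_1$ and of $L_2$. Here I would compute the gradients directly: writing $P=f_{xx}g_y-f_{xy}g_x$ and $Q=f_{xy}g_y-f_{yy}g_x$ for the values at $a$ of $\partial_xJ,\partial_yJ$, one gets $\nabla J|_a=(P,Q)$ and $\nabla J_{1,1}|_a=(Pf_{xy}-Qf_{xx},\,Pf_{yy}-Qf_{xy})$, so the relevant $2\times 2$ determinant equals $P^2f_{yy}-2PQf_{xy}+Q^2f_{xx}$. Using $(P,Q)^{T}=H_f(g_y,-g_x)^{T}$, where $H_f$ is the Hessian of $f$, together with $H_f\,\mathrm{adj}(H_f)=\det(H_f)\,I$, this determinant factors as $\det(H_f)\cdot\big(f_{xx}g_y^2-2f_{xy}g_xg_y+f_{yy}g_x^2\big)=\det(H_f)\cdot L_3|_a$.

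The two factors are precisely what the earlier lemmas control. Lemma \ref{infty0}(1) gives $\{f_x=0\}\pitchfork\{f_y=0\}$, so where both vanish $\nabla f_x$ and $\nabla f_y$ are independent, i.e. $\det H_f\neq 0$; Lemma \ref{infty3} gives $\{f_x=0\}\cap\{f_y=0\}\cap J_{1,2}=\emptyset$, so $a\notin J_{1,2}$ and hence $L_3|_a\neq 0$ (in particular such $a$ is not a cusp). The determinant is therefore nonzero and the curves meet transversally at $a$ as well. The main obstacle, and the only genuine computation, is this factorization at the points where $\nabla f=0$: recognizing that the Jacobian determinant splits as $\det(H_f)\cdot L_3$ is what ties the transversality there precisely to the two generic conditions already secured in Lemmas \ref{infty0} and \ref{infty3}, instead of requiring a fresh and messier genericity argument.
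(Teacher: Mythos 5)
Your proof is correct, and its skeleton coincides with the paper's: the same decomposition of the common points of $J(f,g)$ and $J_{1,1}(f,g)$ according to whether $\nabla f$ vanishes, the same appeal to $j^2(F)\pitchfork S^2_1$ when $\nabla f(a)\neq 0$, and the same two nonvanishing facts (Lemma \ref{infty0}(1) and Lemma \ref{infty3}) at the points where $\nabla f(a)=0$. Where you genuinely differ is in how the nondegeneracy at those degenerate points is verified. The paper argues with ideals in ${\mathcal O}_a$: setting $L=f_xg_y-f_yg_x$, it shows $(L,\,L_xf_y-L_yf_x)=(L,\,f_x[L_xg_y-L_yg_x],\,f_y[L_xg_y-L_yg_x])=(f_x,f_y)={\mathfrak m}_a$, and the first equality is obtained by multiplying by $g_x$, $g_y$ and rearranging modulo $L$, so it requires $g_x(a)\neq 0$ and $g_y(a)\neq 0$, i.e.\ Lemma \ref{infty1}. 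You instead compute the Jacobian matrix of $(J,J_{1,1})$ at $a$ and factor its determinant as $\det(H_f)\cdot L_3|_a$ via the adjugate identity $H_f\,\mathrm{adj}(H_f)=\det(H_f)I$; your intermediate identities ($\nabla J|_a=(P,Q)$ with $(P,Q)^{T}=H_f(g_y,-g_x)^{T}$, the determinant equal to $P^2f_{yy}-2PQf_{xy}+Q^2f_{xx}=(P,Q)\,\mathrm{adj}(H_f)\,(P,Q)^{T}$, and $L_3|_a=Pg_y-Qg_x$) all check out. Since both $J$ and $J_{1,1}$ vanish at $a$, generating ${\mathfrak m}_a$ is equivalent to invertibility of this Jacobian, so the two arguments prove the same statement; but your factorization never uses the nonvanishing of $\nabla g$, so Lemma \ref{infty1} can be dropped from the definition of $V_3$ altogether. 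That is what your route buys: it makes explicit that transversality at the points where $\nabla f=0$ is governed exactly by the two quantities $\det H_f$ and $J_{1,2}(a)$ — precisely the contents of Lemmas \ref{infty0}(1) and \ref{infty3} — whereas in the paper's unit-multiplication argument this is somewhat obscured, and an extra genericity lemma is consumed along the way.
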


\begin{proof}
There is a Zariski open subset $V_3$ which contains only generic
mappings which satisfy hypotheses of all lemmas above. We can also
assume that the curves $\big\{\frac{\partial{f}}{\partial{x}}=0\big\}$ and
$\big\{\frac{\partial{f}}{\partial{y}}=0\big\}$ intersect transversally.
We have to show that the curves $J(f,g)$ and
 $J_{1,1}(f,g)$
intersect transversally at every point $a\in J(f,g)\cap
 J_{1,1}(f,g)$. If $\nabla
f\not=0$ then it follows from transversality of the mapping $F$
to the set $S^2_1$. Hence we can assume
$\big\{\frac{\partial{f}}{\partial{x}}(a)=0\big\}$ and
$\big\{\frac{\partial{f}}{\partial{y}}(a)=0\big\}$. By Lemma \ref{infty1}
we have  $\frac{\partial{g}}{\partial{x}}(a)\not=0$ and
$\frac{\partial{g}}{\partial{y}}(a)\not=0$. Let us denote:
$\frac{\partial f}{\partial x}(x,y)=f_x, \frac{\partial
f}{\partial y}(x,y)=f_y$, etc.  It is enough to prove that in
the ring ${\mathcal O}_a^2$ we have the
 equality $I=(f_xg_y-f_yg_x, (f_{xx}g_y+f_xg_{xy}-f_{xy}g_x-f_yg_{xx})f_y-(f_{xy}g_y+f_xg_{yy}-f_{yy}g_x-f_yg_{xy})f_x)={\textswab m}_a$,
 where ${\textswab m}_a$ denotes the maximal ideal of  ${\mathcal O}_a^2$.
Put $L=f_xg_y-f_yg_x$. Hence $I=(L,L_xf_y-L_yf_x)$. Since
$g_x(a)\not=0,g_y(a)\not=0$, we have
$$I=(L,g_x[L_xf_y-L_yf_x],g_y[L_xf_y-L_yf_x])=(L,L_xg_xf_y-L_yg_xf_x,L_xg_yf_y-L_yg_yf_x)=$$$$=(L,L_xg_yf_x-L_yg_xf_x,L_xg_yf_y-L_yg_xf_y)=
(L,f_x[L_xg_y-L_yg_x],f_y[L_xg_y-L_yg_x]).$$ By Lemma \ref{infty3}
we have $[L_xg_y-L_yg_x](a)\not=0$, hence $I=(f_x,f_y)={\textswab
m}_a$.
\end{proof}

Now we are in a position to prove:

\begin{theo}\label{thmcusps}
There is a Zariski open, dense subset $U\subset \Omega_2(d_1,d_2)$
such that for every mapping $F\in U$ the mapping $F$ has only
two-folds and cusps as singularities and the number of cusps is
equal to
$$d_1^2+d_2^2+3d_1d_2-6d_1-6d_2+7.$$
Moreover, if $d_1>1$ or $d_2>1$ then the set $C(F)$ of critical
points of $F$ is a smooth connected curve, which is topologically
equivalent to a sphere with $g=\frac{(d_1+d_2-3)(d_1+d_2-4)}{2}$
handles and $d_1+d_2-2$ points removed.
\end{theo}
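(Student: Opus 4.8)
The plan is to take $U$ to be the intersection of the Zariski open dense sets supplied by Definition \ref{gen} and by Lemmas \ref{infty0}, \ref{infty}, \ref{infty1}, \ref{infty3} and \ref{infty4}; this intersection is again Zariski open and dense. For such $F$ the assertion that the only singularities are two-folds and simple cusps is exactly the preceding theorem, so the substance lies in the enumerative and topological computations. The first key step is to identify the cusps precisely. A cusp lies in $S^2_1(F)$, hence satisfies $L_1=L_2=0$, so it lies on $J(F)\cap J_{1,1}(F)$; conversely a point of this intersection at which $\nabla f\neq 0$ satisfies the local complete-intersection equations $L_1=L_2=0$ defining $S^2_1$ and is therefore a cusp. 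On the other hand $J(F)\cap J_{1,1}(F)$ also contains every point where $\nabla f=0$, since $L_1$ and $L_2$ both vanish identically there; by Lemma \ref{infty1} such a point has $\nabla g\neq 0$, and by Lemma \ref{infty3} it avoids $J_{1,2}(F)$, so $L_3\neq 0$ and it is \emph{not} a cusp. Thus the cusps are exactly the set $\big(J(F)\cap J_{1,1}(F)\big)\setminus\{\nabla f=0\}$.

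Next I would count by B\'ezout. Here $\deg J(F)=d_1+d_2-2$ and $\deg J_{1,1}(F)=2d_1+d_2-4$, and by Lemma \ref{infty}(1) the closures $\overline{J(F)}$ and $\overline{J_{1,1}(F)}$ do not meet on $L_\infty$, while by Lemma \ref{infty4} they meet transversally, with multiplicity one, at every affine common point. Hence the total number of common points is $(d_1+d_2-2)(2d_1+d_2-4)$. Among these, the points with $\nabla f=0$ are the transversal intersections of $\{f_x=0\}$ and $\{f_y=0\}$ provided by Lemma \ref{infty0}; being finite and disjoint from $L_\infty$, they number $(d_1-1)^2$ by B\'ezout, and each is counted exactly once inside $J(F)\cap J_{1,1}(F)$ thanks to the transversality of Lemma \ref{infty4}. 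Subtracting the spurious locus yields
$$\#\{\text{cusps}\}=(d_1+d_2-2)(2d_1+d_2-4)-(d_1-1)^2=d_1^2+d_2^2+3d_1d_2-6d_1-6d_2+7.$$

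For the topology of $C(F)=J(F)$, I would argue that transversality of $j^1(F)$ to the smooth variety $S^1_1$ makes $C(F)$ a smooth affine curve, while Lemma \ref{infty}(2) makes the projective closure $\overline{J(F)}\subset\mathbb{P}^2$ smooth along $L_\infty$ and transversal to it. Therefore $\overline{J(F)}$ is a smooth plane curve of degree $d_1+d_2-2$; such a curve is irreducible (two components of a plane curve must meet, by B\'ezout) hence connected, and by the genus-degree formula it has genus $\frac{(d_1+d_2-3)(d_1+d_2-4)}{2}$. Transversality to $L_\infty$ forces $\overline{J(F)}$ to meet the line at infinity in exactly $d_1+d_2-2$ distinct points, so $C(F)$ is obtained from a compact surface of that genus by deleting those $d_1+d_2-2$ points; this is precisely a sphere with $\frac{(d_1+d_2-3)(d_1+d_2-4)}{2}$ handles and $d_1+d_2-2$ punctures, and deleting finitely many points from a connected surface leaves it connected.

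I expect the main obstacle to be the bookkeeping in the cusp count: recognizing that $\overline{J(F)}\cap\overline{J_{1,1}(F)}$ splits cleanly into genuine cusps and the spurious locus $\{\nabla f=0\}$, and verifying that every spurious point is simultaneously forced onto $J_{1,1}(F)$ (vanishing of $L_2$), excluded from being a cusp (nonvanishing of $L_3$ via Lemma \ref{infty3}), and counted with multiplicity exactly one (transversality via Lemma \ref{infty4}), so that the B\'ezout total subtracts precisely $(d_1-1)^2$. The auxiliary Lemmas \ref{infty0}--\ref{infty4} are arranged exactly to license each of these claims — no intersections at infinity, transversality of $J(F)$ and $J_{1,1}(F)$ everywhere, and the behaviour of $L_3$ on $\{\nabla f=0\}$ — so once they are invoked in the correct order the computation is forced.
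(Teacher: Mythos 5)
Your proposal is correct and follows essentially the same route as the paper's own proof: cusps are identified with the points of $J(F)\cap J_{1,1}(F)$ where $\nabla f\neq 0$ (with Lemmas \ref{infty1} and \ref{infty3} excluding the points where $\nabla f=0$), the count is the B\'ezout number $(d_1+d_2-2)(2d_1+d_2-4)$ minus the $(d_1-1)^2$ spurious points, using Lemmas \ref{infty0}, \ref{infty} and \ref{infty4} exactly as you indicate, and the topology of $C(F)$ is read off from smoothness, transversality to $L_\infty$, and the genus--degree formula. The only point the paper treats that you skip is the degenerate case $d_2=1$ (and the trivial case $d_1=d_2=1$), where Proposition \ref{generic} does not directly apply since it requires $d_i\ge q=2$; the paper fixes this by replacing $J^2(2)$ with its subspace $\{g_{xx}=g_{xy}=g_{yy}=0\}$ and rerunning the transversality arguments there.
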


\begin{proof}
If $d_1=d_2=1$ then the theorem is obvious. Hence we can assume that
$d_1>1$. Assume first that also $d_2>1$. Note that every point $a$
of the intersection of curves $J(f,g)$ and $J_{1,1}(f,g)$ with
$\nabla_a f\not=0$ is a cusp. Moreover for a general
mapping $F$ points with $\nabla_a f=0$ are not cusps (Lemma
\ref{infty3}). By Bezout Theorem we have that in $J(f,g)\cap J_{1,1}(f,g)$ there are exactly
$(d_1-1)^2$ points with $\nabla f=0$ and
that the number of cusps of a general mapping is equal to
$$(d_1+d_2-2)(2d_1+d_2-4)-(d_1-1)^2=d_1^2+d_2^2+3d_1d_2-6d_1-6d_2+7.$$
If $d_2=1$ then we can modify the definition of $J^2(2)$ (and
other spaces) and replace it by  its subspace given by equations
$g_{xx}=g_{xy}=g_{yy}=0$. Now again all  submersions considered by us
 remain to be submersions and we can proceed as above.
We leave the details to the reader.

Finally by Lemma \ref{infty} we have that $C(F)=S^1_1(F)$ is a
smooth affine curve which is transversal to the line at infinity.
This means that $\overline{C(F)}$ is also smooth at infinity,
hence it is a smooth projective curve of degree $d=d_1+d_2-2$.
Hence  by the Riemmann-Roch Theorem the curve $\overline{C(F)}$
has genus $g=\frac{(d-1)(d-2)}{2}$. This means in particular that
$\overline{C(F)}$ is homeomorphic to a sphere with
$g=\frac{(d-1)(d-2)}{2}$ handles. Moreover, by the Bezout Theorem
it has precisely $d$ points at infinity.
\end{proof}

\section{The discriminant}\label{secDF}
Here we analyze the discriminant of a general mapping from $\Omega(d_1,d_2)$. Let us recall that the discriminant of the mapping
$F:\C^2\to\C^2$ is the curve $\Delta(F):=F(C(F))$, where $C(F)$ is the critical curve of $F.$ We have:

\begin{lem}\label{bir}
There is a non-empty open subset $U\subset \Omega_2(d_1,d_2)$
such that for every mapping $F\in U$:
\begin{enumerate}
\item $F_{|C(F)}$ is injective outside a finite set,
\item if $p\in\Delta(F)$ then $|F^{-1}(p)\cap C(F)|\leq 2$,
\item if $|F^{-1}(p)\cap C(F)|= 2$ then the curve $\Delta(F)$ has a normal
crossing at $p$.
\end{enumerate}
\end{lem}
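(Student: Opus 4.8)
The plan is to read (1)--(3) as statements about the self-intersections of the image of the smooth curve $C(F)$ (Theorem \ref{thmcusps}) under $F|_{C(F)}$, and to control them by multijet transversality. Recall that the $s$-fold multijet space $J^k_s(2)$ is fibred over the configuration space $(\C^2)^{(s)}=\{(a_1,\dots,a_s)\in(\C^2)^s:\ a_i\ \text{pairwise distinct}\}$ of dimension $2s$, with multijet extension $j^k_s(F)(a_1,\dots,a_s)=(j^k(F)(a_1),\dots,j^k(F)(a_s))$. The proof of Proposition \ref{generic} carries over verbatim to this setting: the map $(F,a_1,\dots,a_s)\mapsto j^k_s(F)(a_1,\dots,a_s)$ is again a submersion onto the relevant stratum $\{\mathrm{corank}\le 1\}$ of $J^k_s(2)$, because jet data prescribed at finitely many distinct points can be interpolated by polynomials of degree $\le d_1,d_2$. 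Hence for any smooth locally closed $W\subset J^k_s(2)$ there is a non-empty Zariski open subset of $\Omega_2(d_1,d_2)$ on which $j^k_s(F)\pitchfork W$, and everything below reduces to dimension counts.

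First I would rule out triple points. In $J^1_3(2)$ take the locus $W_3$ where each of the three jets lies in $S^1_1$ (equivalently each $a_i\in C(F)$) and the three target values coincide. This is smooth on the corank-$\le 1$ stratum of codimension $1+1+1+4=7$, since $b_1=b_2=b_3$ imposes $4$ equations; as $\dim(\C^2)^{(3)}=6<7$, transversality forces $(j^1_3F)^{-1}(W_3)=\emptyset$, so no value is attained at three distinct points of $C(F)$, which is (2). For (1), in $J^1_2(2)$ let $W_2$ be the locus where both jets lie in $S^1_1$ and $b_1=b_2$; this has codimension $1+1+2=4=\dim(\C^2)^{(2)}$, so transversality makes $(j^1_2F)^{-1}(W_2)$ a $0$-dimensional algebraic set, hence finite. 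This is exactly the set of ordered pairs of distinct points of $C(F)$ with equal image, so the points of $C(F)$ sharing their image with another point form a finite set and $F|_{C(F)}$ is injective off it, giving (1).

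It remains to upgrade the finitely many double points $p=F(a_1)=F(a_2)$ to normal crossings, i.e. (3). Two things can fail: one of the $a_i$ could be a point where $F|_{C(F)}$ is not an immersion (then the branch of $\Delta(F)$ is singular), or the two smooth branches could be tangent. For the first, in $J^2_2(2)$ I would take the locus with $\sigma_1\in S^2_1$, $\sigma_2\in S^1_1$ and $b_1=b_2$; using $\codim S^2_1=2$ this has codimension $2+1+2=5>4$, so generically it is empty and no double point sits at a cusp. At every such $a_i$ the map $F|_{C(F)}$ is then an immersion (as in the fold analysis of Theorem \ref{disc}), so each branch of $\Delta(F)$ at $p$ is smooth. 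For the second, again in $J^2_2(2)$ (the $2$-jet is needed because the tangent line to $C(F)$ is read off from $j^2F$, cf. the tangent-line equation for $S^1_1(f)$), I would impose on top of the double-point conditions the single extra equation that the two image tangent directions $d_{a_1}F(T_{a_1}C(F))$ and $d_{a_2}F(T_{a_2}C(F))$ coincide; this cuts out a smooth locus of codimension $4+1=5>4$, so generically there are no tangential double points. Intersecting the finitely many non-empty Zariski open sets produced above yields the required $U$; on it the two smooth branches of $\Delta(F)$ through each double point meet transversally, which is exactly a normal crossing.

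The main obstacle is the first paragraph: justifying that the multijet evaluation is a submersion onto the relevant strata for the \emph{fixed}, bounded degrees $d_1,d_2$. Unlike the single-jet case of Proposition \ref{generic}, one must here prescribe jet data simultaneously at several distinct points, so the argument reduces to solving a Hermite interpolation problem within $\Omega_2(d_1,d_2)$; care is needed for small $d_1,d_2$ (treated as in Theorem \ref{thmcusps}), and one must stay on the open stratum $\{\mathrm{corank}\le 1\}$ on which $S^1_1$ and $S^2_1$ are smooth, exactly as in the body of the paper. Once this submersion is established, the three assertions are pure codimension counts as above.
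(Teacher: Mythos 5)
Your outline --- rule out triple points, cusp--double-point coincidences and tangential double points by codimension counts in multijet space --- is sound, and the counts themselves ($7>6$ for triple points, $4=4$ for double points, $5>4$ for the two degenerations in (3)) are correct; the paper does the same dimension counting, but directly on incidence varieties $X\subset\C^2\times\C^2\times\Omega_2^*(d_1,d_2)$ (respectively $\C^2\times\C^2\times\C^2\times\Omega_2^*(d_1,d_2)$), establishing the codimension bounds by exhibiting explicit coefficients $a_{ij},b_{ij}$ with respect to which the defining equations have independent gradients. The genuine gap in your proposal is exactly the step you defer to the last paragraph: the assertion that the proof of Proposition \ref{generic} ``carries over verbatim'' to multijets is false for the bounded-degree space $\Omega_2(d_1,d_2)$. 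Proposition \ref{generic} only requires adjusting the jet of $F$ at a \emph{single} point, which is possible whenever $d_i\ge q$; the multijet evaluation requires simultaneous Hermite interpolation at $s$ distinct points, and this genuinely fails in low degree. Concretely, for $d_1=2$ write $f=a_{00}+a_{10}x+a_{01}y+a_{20}x^2+a_{11}xy+a_{02}y^2$: its $1$-jets at $(0,0)$ and $(1,0)$ never involve $a_{02}$ and always satisfy $f_x(1,0)=2f(1,0)-2f(0,0)-f_x(0,0)$, so the two-point $1$-jet evaluation is nowhere a submersion. In general, prescribing $k$-jets at $s$ distinct points requires degree about $(s-1)(k+1)+k$ (sharp for collinear points), so your argument as written only applies for roughly $d_1,d_2\ge 5$ in steps (2) and (3).

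Since the Lemma is asserted (and later used) for all $d_1,d_2$, the low-degree cases are not vacuous, and the paper spends real effort on them: it restricts to the open set $\Omega_2^*(d_1,d_2)$ of maps for which $g-g(0,0)$ does not divide $f-f(0,0)$, treats $d_2=1$ separately (showing the incidence variety is actually empty when $d_1=2$ and running a different determinant computation for $d_1\ge 3$), and disposes of the degenerate locus $\{a_{01}=b_{01}=0\}\cup\{\sum_j a_{0j}=\sum_j b_{0j}=0\}$ by hand. To repair your proof you would have to either (a) prove the multijet transversality statement with the correct degree hypotheses and then handle the remaining small-degree pairs by ad hoc arguments, or (b) weaken your foundational claim from ``the evaluation is a submersion onto $J^k_s(2)$'' to ``the parametric map is transverse to the particular loci $W_2$, $W_3$ and the tangency locus,'' which no longer follows from interpolation alone and is precisely what the paper's coefficient-by-coefficient computations verify.
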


\begin{proof}
We may assume that $d_1\geq d_2$. Let $\Omega_2^*(d_1,d_2)$ be the set of $(f,g)\in\Omega_2(d_1,d_2)$ such that $g-g(0,0)$ is not $0$ and does not divide $f-f(0,0)$. Note that $\Omega_2^*(d_1,d_2)$ is a non-empty open subset of $\Omega_2(d_1,d_2)$ and if $F\in\Omega_2^*(d_1,d_2)$ and $\alpha\in\Omega_2(1,1)$ is an affine automorphism of $\C^2$ then $F\circ\alpha\in\Omega_2^*(d_1,d_2)$.

To prove (1) consider the set $X=\{(p,q,F)\in\C^2\times\C^2\times\Omega_2^*(d_1,d_2):\ p\neq q,\ F(p)=F(q),\ J(F)(p)=J(F)(q)=0\}$. We will show that $X$ has dimension not greater than $\dim \Omega_2^*(d_1,d_2)$. So the projection of $X$ on $\Omega_2^*(d_1,d_2)$ has finite fibers on some open subset $U\subset \Omega_2^*(d_1,d_2)$. Moreover if the fiber over $F$ is finite then $F_{|C(F)}$ is injective outside a finite set given by the fiber.

Let $p=(0,0)$, $q=(0,1)$, $Y:=\{p\}\times\{q\}\times\Omega_2^*(d_1,d_2)$
and $X_0=X\cap Y$. Note that all fibers of the projection
$X\rightarrow\C^2\times\C^2$ are isomorphic to $X_0$. Thus
$\dim(X)=\dim(X_0)+4$ and to prove (1) it is sufficient to
show that $X_0$ has codimension at least $4$ in $Y$.

Let $(p,q,F)\in Y$ and let $a_{ij}$ and $b_{ij}$ be the parameters in $\Omega_2(d_1,d_2)$ giving
respectively the coefficients of $f$ and $g$ at $x^iy^j$ . For $0\leq i+j\leq d_1$, we have
$\frac{\partial^{i+j}f}{\partial x^iy^j}(p)=i!j!a_{ij}(F)$
and $\frac{\partial^{i+j}f}{\partial x^iy^j}(q)=i!\sum_{k=j}^{d_1-i} \frac{k!}{(k-j)!}a_{ij}(F)$
and similarly for $g$ and $b_{ij}$.

The condition $F(p)=F(q)$ yields the equations $w_1=\sum_{j=1}^{d_1}a_{0j}(F)=0$ and $w_2=\sum_{j=1}^{d_2}b_{0j}(F)=0$, the conditions $J(F)(p)=J(F)(q)=0$ give $w_3=(a_{10}b_{01}-a_{01}b_{10})(F)=0$ and $w_4=(\sum_{j=0}^{d_1-1}a_{1j}\sum_{j=1}^{d_2}jb_{0j}-\sum_{j=1}^{d_1}ja_{0j}\sum_{j=0}^{d_2-1}b_{1j})(F)=0$. If $d_2\geq 2$ then note that the matrix $\pa{(w_1,w_2,w_3,w_4)}{(a_{01},b_{01},a_{10},a_{11})}$ is triangular and its determinant is equal to $b_{01}(F)\sum_{j=1}^{d_2}b_{0j}(F)$. Calculating similar derivations with $a_{10}$ replaced by $b_{10}$ or $a_{11}$ replaced by $b_{11}$ we obtain that $\nabla w_1,\ldots,\nabla w_4$ are independent outside $S=\{a_{01}(F)=b_{01}(F)=0\}\cup\{\sum_{j=1}^{d_2}a_{0j}(F)=\sum_{j=1}^{d_2}b_{0j}(F)=0\}$. Thus $X_0\setminus S$ has codimension $4$ in $Y$ and it is easy to see that $X_0\cap S$ has also codimension at least $4$.

If $d_2=1$ then we have $w_2=b_{01}(F)=0$. Since $F=(f,g)\in\Omega_2^*(d_1,1)$ we have $b_{10}(F)\neq 0$ and $x$ does not divide $f-f(0,0)$, i.e. $a_{0j}(F)\neq 0$ for some $j\geq 1$. Moreover we may take $w_3=a_{01}(F)=0$ and $w_4=\sum_{j=1}^{d_1}ja_{0j}(F)=0$. If $d_1=2$ then we obtain a contradiction thus showing that $X$ is in fact empty. If $d_1\geq 3$ then calculating $\det\pa{(w_1,w_2,w_3,w_4)}{(b_{01},a_{01},a_{02},a_{03})}=1$ we obtain that $X_0$ has codimension $4$.

To prove (2) consider the set $X=\{(p,q,r,F)\in\C^2\times\C^2\times\C^2\times\Omega_2^*(d_1,d_2):\ p\neq q\neq r\neq p,\ F(p)=F(q)=F(r),\
J(F)(p)=J(F)(q)=J(F)(r)=0\}$. Similarly as in (1) we compute that $X$ has codimension at least $7$. It follows that the projection of $X$ on $\Omega_2^*(d_1,d_2)$ has empty fibers on some open subset $U\subset \Omega_2^*(d_1,d_2)$. Note that unlike in (1) there are two types of fibers of the projection onto $\C^6$: $X_0:=\{((0,0),(1,0),(0,1))\}\times \Omega_2^*(d_1,d_2)\cap X$ and $X_t:=\{((0,0),(0,1),(0,t))\}\times \Omega_2^*(d_1,d_2)\cap X$. In both cases the computation is purely technical and similar to the computation in (1), so we leave the details to the reader.

To prove (3) note that if $q\in C(F)$ then $d_qF(T_qC(F))$ is spanned by the vector $(J_{1,1}(F)(q),J_{1,2}(F)(q))$. Thus if $F^{-1}(p)\cap C(F)=\{q_1,q_2\}$ then $\Delta(F)$ has a normal crossing at $p$ if and only if $(J_{1,1}(F)(q_1),J_{1,2}(F)(q_1))$ and $(J_{1,1}(F)(q_2),J_{1,2}(F)(q_2))$ are independent, i.e. $J_{1,1}(F)(q_1)J_{1,2}(F)(q_2)-J_{1,2}(F)(q_1)J_{1,1}(F)(q_2)\neq 0$. Similarly as in (1) let us consider the set $X=\{(p,q,F)\in\C^2\times\C^2\times\Omega_2^*(d_1,d_2):\ p\neq q,\ F(p)=F(q),\ J(F)(p)=J(F)(q)=
J_{1,1}(F)(p)J_{1,2}(F)(q)-J_{1,2}(F)(p)J_{1,1}(F)(q)=0\}$. One can compute that $X$ has codimension at least $5$, thus the projection of $X$ on $\Omega_2^*(d_1,d_2)$ has empty fibers on some open subset $U\subset \Omega_2^*(d_1,d_2)$.
\end{proof}

Hence for a general $F$ the only singularities of $\Delta(F)$ are cusps and nodes. We showed in Theorem \ref{thmcusps} that there are exactly $c(F)=d_1^2+d_2^2+3d_1d_2-6d_1-6d_2+7$ cusps. Now we will compute the number $d(F)$ of nodes of $\Delta(F)$. We will use the following theorem of Serre (see \cite{mil}, p. 85):

\begin{theo}\label{thmgenusdelta}
If $\Gamma$ is an irreducible curve of degree $d$ and genus $g$  in the complex projective plane
then $$\frac{1}{2} (d-1)(d-2)= g + \sum_{z\in \Sing(\Gamma)} \delta_z,$$
where $\delta_z$ denotes the delta invariant of a point $z$.
\end{theo}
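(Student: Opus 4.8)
The plan is to prove the formula by comparing the structure sheaves of $\Gamma$ and of its normalization through Euler characteristics. Let $\nu\colon\tilde\Gamma\to\Gamma$ be the normalization map. Since $\Gamma$ is irreducible, $\tilde\Gamma$ is a smooth irreducible projective curve, and $g$ is by definition its genus, so that $\chi(\mathcal{O}_{\tilde\Gamma})=1-g$. The map $\nu$ is an isomorphism over the smooth locus $\Gamma\setminus\Sing(\Gamma)$, and over each singular point it is finite-to-one onto the branches there.

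First I would write down the conductor exact sequence of sheaves on $\Gamma$,
$$0\longrightarrow \mathcal{O}_\Gamma\longrightarrow \nu_*\mathcal{O}_{\tilde\Gamma}\longrightarrow \mathcal{S}\longrightarrow 0,$$
where $\mathcal{S}$ is the cokernel. Because $\nu$ is an isomorphism away from the finite set $\Sing(\Gamma)$, the sheaf $\mathcal{S}$ is a skyscraper supported on $\Sing(\Gamma)$, whose stalk at a point $z$ is the quotient of the integral closure $\tilde{\mathcal{O}}_z$ of the local ring by $\mathcal{O}_{\Gamma,z}$. By the very definition of the delta invariant, $\delta_z=\dim_\C(\tilde{\mathcal{O}}_z/\mathcal{O}_z)$, this stalk has dimension $\delta_z$, so $\mathcal{S}$ has no higher cohomology and $\dim_\C H^0(\Gamma,\mathcal{S})=\sum_{z\in\Sing(\Gamma)}\delta_z$.

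Next I would take Euler characteristics in the sequence. Since $\nu$ is a finite morphism it has no higher direct images, whence $\chi(\nu_*\mathcal{O}_{\tilde\Gamma})=\chi(\mathcal{O}_{\tilde\Gamma})=1-g$. Additivity of $\chi$ then gives
$$\chi(\mathcal{O}_\Gamma)=\chi(\mathcal{O}_{\tilde\Gamma})-\sum_{z\in\Sing(\Gamma)}\delta_z=1-g-\sum_{z}\delta_z.$$
It remains to identify the arithmetic side. For this I would use that $\Gamma\subset\mathbb{P}^2$ is a plane curve of degree $d$, so from the ideal-sheaf sequence $0\to\mathcal{O}_{\mathbb{P}^2}(-d)\to\mathcal{O}_{\mathbb{P}^2}\to\mathcal{O}_\Gamma\to 0$ and the known cohomology of line bundles on $\mathbb{P}^2$ one computes $\chi(\mathcal{O}_\Gamma)=1-\frac{(d-1)(d-2)}{2}$; that is, the arithmetic genus of $\Gamma$ equals $\frac{(d-1)(d-2)}{2}$. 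Comparing the two expressions for $\chi(\mathcal{O}_\Gamma)$ yields $\frac{(d-1)(d-2)}{2}=g+\sum_z\delta_z$, as claimed.

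The cohomology computation on $\mathbb{P}^2$ and the additivity of $\chi$ are routine; the only point that requires genuine care, and the conceptual heart of the argument, is the identification of the skyscraper sheaf $\mathcal{S}$ together with the verification that its length at each singular point is exactly $\delta_z$. A purely topological proof in the spirit of Milnor is also available, running through the comparison $\chi(\Gamma)=\chi(\tilde\Gamma)-\sum_z(r_z-1)$ (with $r_z$ the number of branches at $z$), a direct computation of the topological Euler characteristic of a degree-$d$ plane curve via its Milnor numbers, and the local identity $\mu_z=2\delta_z-r_z+1$; but the sheaf-theoretic route above is the most direct and is the one I would carry out.
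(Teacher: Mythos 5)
Your proof is correct, and in fact there is no internal proof to compare it with: the paper invokes this statement as a known theorem of Serre, with a citation to Milnor's book (p.~85), and never proves it. Your sheaf-theoretic argument is the standard proof of that cited result (it is essentially Serre's original argument): the conductor sequence $0\to\mathcal{O}_\Gamma\to\nu_*\mathcal{O}_{\tilde\Gamma}\to\mathcal{S}\to 0$, together with finiteness of $\nu$ (so $R^1\nu_*=0$ and $\chi(\nu_*\mathcal{O}_{\tilde\Gamma})=\chi(\mathcal{O}_{\tilde\Gamma})=1-g$) and additivity of $\chi$, gives $\chi(\mathcal{O}_\Gamma)=1-g-\sum_{z}\delta_z$, while the restriction sequence $0\to\mathcal{O}_{\mathbb{P}^2}(-d)\to\mathcal{O}_{\mathbb{P}^2}\to\mathcal{O}_\Gamma\to 0$ gives $\chi(\mathcal{O}_\Gamma)=1-\frac{1}{2}(d-1)(d-2)$; comparing the two expressions yields the formula, and each individual step you perform is right. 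Two remarks for completeness. First, you take $\delta_z=\dim_\C(\tilde{\mathcal{O}}_z/\mathcal{O}_z)$ as the definition of the delta invariant; this agrees with the definition in the cited source, but since the paper later computes $\delta_z$ from Puiseux expansions (Theorem \ref{milnor2}), using your proof inside the paper would also require the standard equivalence between this algebraic description and the analytic/Puiseux one (normalization is compatible with passing to analytic local rings), which deserves at least a remark. Second, $g$ in the statement must be understood as the geometric genus, i.e., the genus of the normalization $\tilde\Gamma$; that is exactly how you read it, and it matches how the paper applies the theorem, where the genus of $\Delta(F)$ is obtained from the smooth projective curve $\overline{C(F)}$ birational to it.
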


First we compute the degree of the discriminant:

\begin{lem}\label{lemdegdisc}
Let $F=(f,g)\in \Omega(d_1,d_2)$ be a general mapping. If $d_1\ge d_2$ then $\deg\Delta(F)= d_1(d_1+d_2-2)$.
\end{lem}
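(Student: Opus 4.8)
The plan is to compute $\deg\Delta(F)$ by intersecting it with a general line in the target plane and transporting that intersection back to the critical curve $C(F)$. First I would recall from Lemma~\ref{bir} that $F_{|C(F)}$ is birational onto its image $\Delta(F)$. Hence for a general line $L=\{\alpha u+\beta v=\gamma\}$ in the target, the points of $\Delta(F)\cap L$ are in bijection with the points $p\in C(F)$ satisfying $\alpha f(p)+\beta g(p)=\gamma$: for general $\gamma$ the line $L$ avoids the finitely many singular points of $\Delta(F)$ and the finitely many images of the non-injectivity locus of $F_{|C(F)}$, so each point of $\Delta(F)\cap L$ has exactly one preimage on $C(F)$, and none of these lies at infinity. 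Therefore $\deg\Delta(F)=\#\big(C(F)\cap F^{-1}(L)\big)$.

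Next I would identify the two plane curves being intersected in the source. By Theorem~\ref{thmcusps} the curve $C(F)=\{J(F)=0\}$ is smooth of degree $d_1+d_2-2$, while $F^{-1}(L)=\{\alpha f+\beta g-\gamma=0\}$ has degree $d_1$, since $d_1\ge d_2$ and for general $\alpha$ the top-degree part is governed by $f$. By B\'ezout's theorem the projective closures of these curves meet in exactly $d_1(d_1+d_2-2)$ points of $\mathbb{P}^2$ counted with multiplicity. For general $\gamma$ the fiber $F^{-1}(L)$ is smooth (generic smoothness for the function $\alpha f+\beta g$) and meets the smooth curve $C(F)$ transversally, so every affine intersection point is simple. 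It then remains to show that, for a general $F$ (and, when $d_1=d_2$, a general direction $(\alpha,\beta)$), there are no intersection points at infinity, so that all $d_1(d_1+d_2-2)$ B\'ezout points are affine and contribute.

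The main obstacle is precisely this analysis at infinity. The points at infinity of $\overline{C(F)}$ lie where the leading form $J_\infty=(f_{d_1})_x(g_{d_2})_y-(f_{d_1})_y(g_{d_2})_x$ vanishes on $L_\infty$, while those of $\overline{F^{-1}(L)}$ lie where the leading form of $\alpha f+\beta g$ vanishes. When $d_1>d_2$ this leading form is $\alpha f_{d_1}$, and I would invoke Euler's relation: at a point of $L_\infty$ with $f_{d_1}=0$ one has $(f_{d_1})_x=\lambda y$, $(f_{d_1})_y=-\lambda x$, whence $J_\infty=\lambda d_2\, g_{d_2}$ there; for general $f,g$ the form $f_{d_1}$ has simple roots (so $\lambda\ne 0$) and shares no root with $g_{d_2}$, so $J_\infty\ne 0$ and the closures are disjoint at infinity. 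When $d_1=d_2$ the leading form is $\alpha f_{d_1}+\beta g_{d_1}$, whose roots move with $(\alpha,\beta)$; since $J_\infty\not\equiv 0$ has finitely many roots on $L_\infty$ and $f_{d_1},g_{d_1}$ have no common root there (both generic in $F$), a general direction $(\alpha,\beta)$ makes $\alpha f_{d_1}+\beta g_{d_1}$ nonvanishing at each root of $J_\infty$, again giving no intersection at infinity. In both cases the affine count equals the full B\'ezout number, so $\deg\Delta(F)=d_1(d_1+d_2-2)$.
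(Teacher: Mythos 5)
Your proof is correct and follows essentially the same route as the paper's: intersect $\Delta(F)$ with a general line $L$, use the birationality of $F_{|C(F)}$ from Lemma \ref{bir} to transfer the count to $C(F)\cap F^{-1}(L)$, and apply B\'ezout to the curves $\{J(F)=0\}$ and $\{\alpha f+\beta g-\gamma=0\}$, the crux being that their projective closures share no points at infinity. The paper merely asserts that last fact (its justification elsewhere is the incidence-variety argument of Lemma \ref{lemstyczne}), whereas your Euler-relation computation of the leading form of $J(F)$ at a zero of $f_{d_1}$ proves it directly; this is a self-contained and perfectly valid substitute.
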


\begin{proof}
Let $L\subset \C^2$ be a generic line $\{ax+by+c=0\}$. Then $L$ intersects $\Delta(F)$ in smooth points and $\deg\Delta(F)=\# L\cap \Delta(F)$.
If $j: C(F)\to \Delta(F)$ is a mapping induced by $F$ then $\# L\cap \Delta(F)=\# j^{-1}(L\cap \Delta(F)).$ The curve $j^{-1}(L)=\{af+bg+c=0\}$ has no common points at infinity
with $C(F)$. Hence by Bezout Theorem we have $\# j^{-1}(L\cap \Delta(F))=(\deg j^{-1}(L))(\deg C(F))=d_1(d_1+d_2-2)$. Consequently $\deg \Delta(F) = d_1(d_1+d_2-2)$.
\end{proof}

We have the following method of computing the delta invariant in the case of one analytic branch (see \cite{mil}, p. 92-93):

\begin{theo}\label{milnor2}
Let $V_0\subset \C^2$ be an irreducible germ of an analytic curve with the Puiseux parametrization of the form
 $$z_1=t^{a_0},\ z_2=\sum_{i>0} \lambda_i t^{a_i}, \text{ where }\lambda_i\neq 0,\ a_1<a_2<a_3<\ldots$$
Let $D_j=\gcd(a_0,a_1,\ldots, a_{j-1}).$ Then $$\delta_0=\frac{1}{2}\sum_{j\ge 1} (a_j-1)(D_j-D_{j+1}).$$
If $V=\bigcup^r_{i=1} V_i$ has $r$ branches then $$\delta(V)=\sum^r_{i=1} \delta(V_i)+\sum_{i<j} V_i\cdot V_j,$$
where $V\cdot W$ denotes the intersection product.
\end{theo}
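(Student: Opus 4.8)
The plan is to prove the two assertions in turn: reduce the single-branch formula to a count of gaps in the value semigroup, and deduce the multi-branch formula from a short exact sequence. First I would recall that for a reduced plane curve germ $V_0$ with local ring $\mathcal{O}=\mathcal{O}_{V_0,0}$ and normalization $\overline{\mathcal{O}}$ one has $\delta_0=\dim_{\C}\overline{\mathcal{O}}/\mathcal{O}$. For an irreducible germ the normalization is $\C\{t\}$, and the given Puiseux parametrization realizes the inclusion $\mathcal{O}\hookrightarrow\C\{t\}$, $z_1\mapsto t^{a_0}$, $z_2\mapsto\sum_{i>0}\lambda_i t^{a_i}$; primitivity of the parametrization (that the $\gcd$ of all occurring exponents is $1$) guarantees that $\C\{t\}$ is indeed the normalization. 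Filtering $\C\{t\}$ by order of vanishing in $t$, and noting each graded piece is one-dimensional, one gets $\delta_0=\dim_{\C}\C\{t\}/\mathcal{O}=\#\{n>0:\ n\notin\Gamma\}$, where $\Gamma=\{\operatorname{ord}_t h:\ 0\neq h\in\mathcal{O}\}$ is the value semigroup. Thus the whole single-branch problem becomes counting the gaps of $\Gamma$.

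Next I would analyse $\Gamma$ through the characteristic exponents. An exponent $a_j$ contributes to the stated sum precisely when it lowers the $\gcd$, i.e. when $D_j\neq D_{j+1}$; writing these characteristic exponents as $m_1<\dots<m_g$ and putting $e_k=\gcd(a_0,m_1,\dots,m_k)$, one has $D_j=e_{k-1}$ and $D_{j+1}=e_k$ whenever $a_j=m_k$. The semigroup $\Gamma$ is the numerical semigroup minimally generated by $\overline{\beta}_0=a_0$ and the values $\overline{\beta}_k$ produced by the recursion $\overline{\beta}_1=m_1$, $\overline{\beta}_{k+1}=(e_{k-1}/e_k)\overline{\beta}_k+m_{k+1}-m_k$, each $\overline{\beta}_k$ being the order of a suitable polynomial in $z_1,z_2$ realizing the least new attainable value. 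Since a plane branch is Gorenstein, $\Gamma$ is symmetric, so the number of gaps is exactly half the conductor $c$ of $\Gamma$. A straightforward induction on $g$, using the recursion for the $\overline{\beta}_k$, evaluates the conductor as $c=\sum_{k=1}^{g}(m_k-1)(e_{k-1}-e_k)=\sum_{j\ge1}(a_j-1)(D_j-D_{j+1})$, whence $\delta_0=c/2$ is the asserted expression.

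For the second assertion I would argue by induction on the number of branches, the core case being $V=V_1\cup V_2$ with ideals $I_1,I_2\subset\C\{z_1,z_2\}$. The normalization splits as $\overline{\mathcal{O}}_V=\overline{\mathcal{O}}_{V_1}\oplus\overline{\mathcal{O}}_{V_2}$, and there is a short exact sequence
$$0\to\mathcal{O}_V\to\mathcal{O}_{V_1}\oplus\mathcal{O}_{V_2}\to\C\{z_1,z_2\}/(I_1+I_2)\to0,$$
whose cokernel has dimension equal to the local intersection multiplicity $V_1\cdot V_2$. Adding colengths gives $\delta(V)=\delta(V_1)+\delta(V_2)+V_1\cdot V_2$, and since intersection multiplicity is additive in the components, peeling off one branch at a time yields $\delta(V)=\sum_i\delta(V_i)+\sum_{i<j}V_i\cdot V_j$.

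The multi-branch step is routine homological bookkeeping. The real work, and the step I expect to be the main obstacle, is the semigroup analysis of a single branch: identifying $\Gamma$ with the numerical semigroup generated by the $\overline{\beta}_k$, invoking its symmetry, and carrying out the inductive evaluation of the conductor. All the arithmetic subtlety of the Puiseux exponents and of the gcd sequence $D_j$ is concentrated there.
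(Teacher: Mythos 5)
The first thing to note is that the paper does not prove this statement at all: Theorem \ref{milnor2} is quoted as a classical fact, with a pointer to \cite{mil}, pp.~92--93, and is then used as a black box in the proof of Theorem \ref{theodeltaz}. So there is no in-paper argument to measure your proposal against, and it must be judged on its own merits. Your route is the standard semigroup-theoretic one, and it is correct in outline. The reduction $\delta_0=\dim_\C\overline{\mathcal{O}}/\mathcal{O}=\#\{n>0:\ n\notin\Gamma\}$ is right (and primitivity of the parametrization is indeed needed here, as you note); the identification of the nonzero terms $(a_j-1)(D_j-D_{j+1})$ with the characteristic exponents, via $D_j=e_{k-1}$ and $D_{j+1}=e_k$ when $a_j=m_k$, is right; and the multi-branch step is complete and correct: the exact sequence $0\to\mathcal{O}_V\to\mathcal{O}_{V_1}\oplus\mathcal{O}_{V_2}\to\C\{z_1,z_2\}/(I_1+I_2)\to 0$, the identification of the cokernel's dimension with $V_1\cdot V_2$, and additivity of intersection numbers give the $r$-branch formula by peeling off branches. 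The caveat, which you yourself flag, is that the single-branch case leans on two invoked theorems that are at least as deep as the formula being proved: that $\Gamma$ is the numerical semigroup generated by the $\overline{\beta}_k$ satisfying your recursion (Zariski's structure theorem for the semigroup of a plane branch), and that $\Gamma$ is symmetric (the Gorenstein property). With those granted, the inductive evaluation of the conductor as $\sum_k(m_k-1)(e_{k-1}-e_k)$ is routine and the formula follows. So what you have is a correct reduction of Milnor's formula to Zariski's theory of plane branches rather than a self-contained proof; given that the paper itself treats the statement as a citation, this is an appropriate level of rigor, but a fully self-contained argument would require proofs (or precise references) for the semigroup structure and its symmetry, since essentially all the content of the formula is concentrated in those two facts.
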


The main result of this section will be based on the following:

\begin{theo}\label{theodeltaz}
Let $F\in \Omega(d_1,d_2)$ be a general mapping. Let $d_1\geq d_2$ and $d=\gcd(d_1,d_2)$. Denote by $\overline{\Delta}$ the projective closure
of the discriminant $\Delta$. Then $$\sum_{z\in (\overline{\Delta}\setminus \Delta)} \delta_z= \frac{1}{2}d_1(d_1-d_2)(d_1+d_2-2)^2+\frac{1}{2}(-2d_1+d_2+d)(d_1+d_2-2).$$
\end{theo}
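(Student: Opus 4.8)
The plan is to compute the contribution to the delta invariant of $\overline{\Delta}$ coming from the points at infinity by explicitly parametrizing the branches of $\overline{\Delta}$ there and applying Theorem \ref{milnor2}. The first step is to understand the behavior of the critical curve $C(F)$ at infinity. By Theorem \ref{thmcusps} we know that $\overline{C(F)}$ is smooth and meets $L_\infty$ transversally in exactly $d_1+d_2-2$ points. I would choose suitable affine coordinates so that near each such point the curve $C(F)$ admits a local parametrization $t\mapsto (x(t),y(t))$ with a pole of controlled order, governed by the homogeneous leading parts $f_{d_1}$ and $g_{d_2}$ of $f$ and $g$. Since the points at infinity of $C(F)$ are the zeros of the leading part of the Jacobian $f_xg_y-f_yg_x$, which (as used in Lemma \ref{infty}) depends only on $f_{d_1},g_{d_2}$, I expect $C(F)\cap L_\infty$ to split into the directions determined by these homogeneous forms.

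\medskip
\noindent\textbf{Puiseux analysis of the discriminant at infinity.}
Next I would push such a local parametrization of $C(F)$ forward through $F$ to obtain a parametrization of the corresponding branch of $\overline{\Delta}$. Concretely, composing $t\mapsto(x(t),y(t))$ with $F=(f,g)$ gives a branch $t\mapsto(f(x(t),y(t)),g(x(t),y(t)))$; homogenizing and reading off the orders of vanishing (or pole orders) in $t$ yields the exponents $a_0<a_1<\cdots$ of the Puiseux expansion of that branch in coordinates centered at the relevant point of $L_\infty$. The case $d_1=d_2$ and the case $d_1>d_2$ behave very differently: when $d_1=d_2$ the statement of the main theorem records $d_1+d_2-2$ \emph{smooth} points at infinity tangent to $L_\infty$ with multiplicity $d_1$, so each branch there is smooth and contributes only through mutual intersection numbers; when $d_1>d_2$ all branches concentrate at the single point $P$, and the first key exponent in each Puiseux pair will be controlled by the difference $d_1-d_2$, producing the dominant term $\tfrac12 d_1(d_1-d_2)(d_1+d_2-2)^2$.

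\medskip
\noindent\textbf{Summing the contributions.}
Having the Puiseux data of each branch, I would apply the single-branch formula $\delta_0=\tfrac12\sum_{j\ge1}(a_j-1)(D_j-D_{j+1})$ of Theorem \ref{milnor2} to each of the $d_1+d_2-2$ branches, and then add the pairwise intersection numbers $\sum_{i<j}V_i\cdot V_j$ of the branches meeting at the same point of $L_\infty$. The role of $d=\gcd(d_1,d_2)$ is precisely that it controls how the $d_1+d_2-2$ directions at infinity group together into branches sharing a common tangent cone, and hence how the $D_j=\gcd(a_0,\ldots,a_{j-1})$ behave; this is what produces the correction term $\tfrac12(-2d_1+d_2+d)(d_1+d_2-2)$. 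The intersection-number count can be organized by Bezout against generic lines through $P$, or by directly intersecting the homogenized branch parametrizations.

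\medskip
\noindent\textbf{Main obstacle.}
The principal difficulty will be obtaining the exact Puiseux exponents of each branch of $\overline{\Delta}$ at infinity, rather than just their leading orders, since the delta invariant is sensitive to the full sequence $a_0<a_1<\cdots$ and to the $\gcd$'s $D_j$. In particular I expect the delicate point to be verifying that for a \emph{general} $F$ the branches have no unexpected extra tangency or coincidence of exponents beyond those forced by $d_1,d_2$ and $d=\gcd(d_1,d_2)$, so that the genericity hypotheses established in Lemmas \ref{infty0}--\ref{infty4} guarantee the parametrizations are as generic as the degrees allow. Once the exponents and the grouping into $d$-fold families of branches are pinned down, the final summation reduces to the bookkeeping that yields the stated closed form.
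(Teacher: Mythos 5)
Your overall strategy is the same as the paper's: parametrize each branch of $\overline{C(F)}$ at a point of $L_\infty$, push it forward through an extension of $F$, read off the Puiseux exponents, and evaluate $\sum_i \delta(V_i)+\sum_{i<j}V_i\cdot V_j$ via Theorem \ref{milnor2}, splitting into the cases $d_1=d_2$ and $d_1>d_2$. However, there is a genuine gap at the step you defer to genericity: the hypotheses of Lemmas \ref{infty0}--\ref{infty4} do \emph{not} suffice to pin down the Puiseux data, because they only control first-order behavior of $C(F)$ (smoothness, transversality to $L_\infty$). What the computation actually requires is: (i) $\overline{f}$ and $\overline{g}$ do not vanish at the points $P_i$ of $\overline{C(F)}\cap L_\infty$, so that $F$ extends near $P_i$ as $\overline{F}(x,z)=\left(z^{d_1-d_2}\overline{g}/\overline{f},\, z^{d_1}/\overline{f}\right)$; (ii) writing $\overline{f}(\gamma(t))=\overline{f}(P)(1+ct+\cdots)$ and $\overline{g}(\gamma(t))=\overline{g}(P)(1+et+\cdots)$ along a local parametrization $\gamma$ of $\overline{C(F)}$ at $P$, one needs $ce\neq 0$ and $d_2c\neq d_1e$, which is precisely what forces the third exponent of each branch to be $a_2=d_1+1$ (if this failed the branch delta would be larger and the formula false); and (iii) for $i\neq j$, $\overline{f}(P_i)^{d_2}\overline{g}(P_j)^{d_1}\neq\overline{f}(P_j)^{d_2}\overline{g}(P_i)^{d_1}$, which is what makes each pairwise intersection attain its minimal value $V_i\cdot V_j=d_1(d_1-d_2)$. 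Each of (i)--(iii) needs a fresh incidence-variety dimension count in $L_\infty\times\Omega_2(d_1,d_2)$ (the paper's Lemmas \ref{lemstyczne} and \ref{row}); these second-order genericity statements are the real content of the proof, and nothing established earlier supplies them.

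A second, conceptual error: your description of the role of $d=\gcd(d_1,d_2)$ as controlling "how the directions at infinity group together into $d$-fold families of branches" is not what happens. For $d_1>d_2$ all $d_1+d_2-2$ branches meet at the single point $Q=(1{:}0{:}0)$, and $d$ enters \emph{inside each single branch}: after a change of parameter each branch has the form $T\mapsto\left(c_1T^{d_1-d_2},\, c_2T^{d_1}(1+\epsilon T+\cdots)\right)$ with $\epsilon\neq 0$ by (ii), so its exponent sequence is $a_0=d_1-d_2$, $a_1=d_1$, $a_2=d_1+1$, and $D_2=\gcd(d_1-d_2,d_1)=d$, giving $2\delta(V_i)=(d_1-1)(d_1-d_2-1)+(d-1)$ per branch. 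Also, your first suggested method for the pairwise count, Bezout against generic lines through the point, does not compute a local intersection number of two branches; one instead computes $V_i\cdot V_j=\mult_0 Q$ for the map $Q(T,S)$ formed by the differences of the two parametrizations, where (iii) guarantees the leading forms have no common nontrivial zero, yielding $d_1(d_1-d_2)$.
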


\begin{proof}
Let $\tilde{f}(x,y,z)=z^{d_1}f\left(\frac{x}{z},\frac{y}{z}\right)$ and $\tilde{g}(x,y,z)=z^{d_2}g\left(\frac{x}{z},\frac{y}{z}\right)$ be the homogenizations of $f$ and $g$ and let $\overline{f}(x,z)=\tilde{f}(x,1,z)$ and $\overline{g}(x,z)=\tilde{g}(x,1,z)$. For a general mapping the curves $C(F)$ and $\{f=0\}$ have no common points at infinity (see Lemma \ref{lemstyczne}). Moreover by a linear change of coordinates in the domain we can ensure that $(1:0:0)\notin \overline{C(F)}$. Thus $F$ extends to a neighborhood of $\overline{C(F)}\cap L_\infty$ on which it is given by the formula
$$\overline{F}(x,z)= \left(z^{d_1-d_2}\frac{\overline{g}(x,z)}{\overline{f}(x,z)}, \frac{z^{d_1}}{\overline{f}(x,z)}\right).$$

Let $\{P_1,\ldots,P_{d_1+d_2-2}\}=\overline{C(F)}\cap L_\infty$, fix a point $P=P_i$. The curve $\overline{C(F)}$ is transversal to the line at infinity so it has a local parametrization at $P$ of the form $\gamma(t):=(\sum_i a_it^i,t)$. We have the following:

\begin{lem}\label{lemstyczne}
If $F$ is a general mapping then $\overline{f}(P)\neq 0$, $\overline{g}(P)\neq 0$ and
$$\overline{f}(\gamma(t))=\overline{f}(P)(1+ct+\ldots),\ \overline{g}(\gamma(t))=\overline{g}(P)(1+dt+\ldots),$$
where $cd\neq 0$ and $d_2c\neq d_1d$.
\end{lem}

\begin{proof}
Let $L_\infty$ be the line at $\{z=0\}$ in $\mathbb{P}^2$ and let $\tilde{J}$ be the homogenization of $J$. Obviously $\tilde{J}(F)=\pa{\tilde{f}}{x}\pa{\tilde{g}}{y}-\pa{\tilde{f}}{y}\pa{\tilde{g}}{x}$. To prove that $\overline{f}(P)\neq 0$ consider the set $X=\{(p,F)\in L_\infty\times\Omega_2(d_1,d_2):\ \tilde{f}(p)=\tilde{J}(p)=0\}$. Clearly $X$ has codimension $2$ in $L_\infty\times\Omega_2(d_1,d_2)$ so a general fiber of the projection on $\Omega_2(d_1,d_2)$ is empty. Similarly we obtain $\overline{g}(P)\neq 0$.

Now let $\overline{J}(x,z)=\tilde{J}(x,1,z)$. Since $\overline{J}(\gamma(t))=0$ and $\pa{\gamma(t)}{t}_{|t=0}=(a_1,1)$ we have
$$\overline{f}(P)c\pa{\overline{J}(P)}{x}=\left(\pa{\overline{f}(\gamma(t))}{t}\pa{\overline{J}(\gamma(t))}{x}\right)_{|t=0}=$$
$$\left(\pa{\overline{f}(\gamma(t))}{x}a_1\pa{\overline{J}(\gamma(t))}{x}+\pa{\overline{f}(\gamma(t))}{z}\pa{\overline{J}(\gamma(t))}{x}\right)_{|t=0}=$$
$$\left(-\pa{\overline{f}(\gamma(t))}{x}\pa{\overline{J}(\gamma(t))}{z}+\pa{\overline{f}(\gamma(t))}{z}\pa{\overline{J}(\gamma(t))}{x}\right)_{|t=0}=
\pa{\overline{f}(P)}{z}\pa{\overline{J}(P)}{x}-\pa{\overline{f}(P)}{x}\pa{\overline{J}(P)}{z}$$

Consider the set
$$X=\left\{(p,F)\in L_\infty\times\Omega_2(d_1,d_2):\ \tilde{J}(F)(p)=
\left(\pa{\tilde{f}}{z}\pa{\tilde{J}}{x}-\pa{\tilde{f}}{x}\pa{\tilde{J}}{z}\right)(p)=0\right\}.$$

Note that if $c=0$ then the fiber over $P$ of the projection from $X$ to $L_\infty$ is non-empty. Hence it suffices to prove that $X$ has codimension at least $2$. Similarly as in the proof of Lemma \ref{bir} we take $p=(0:1:0)$ and $Y:=\{p\}\times\Omega_2(d_1,d_2)$ and show that $X\cap Y$ has codimension $2$ in $Y$. Let $a_{ij}$ be the parameters in $\Omega_2(d_1,d_2)$ giving the coefficients of $\tilde{f}$ at $x^iy^{d_1-i-j}z^j$ (i.e. of $f$ at $x^iy^{d_1-i-j}$) and let $a_{ij}$ describe the coefficients of $\tilde{g}$.

The first equation of $X\cap Y$ is $d_2a_{01}b_{00}-d_1b_{01}a_{00}$, denote the second one by $w$. We have $\pa{w}{a_{02}}=2d_2b_{00}a_{10}$, $\pa{w}{b_{02}}=-2d_1a_{00}a_{10}$, $\pa{w}{a_{11}}=d_2b_{00}a_{01}$ and $\pa{w}{b_{11}}=d_1a_{00}a_{01}$, so the equations are independent outside the set $\{a_{10}=a_{01}=0\}\cup\{a_{00}=b_{00}=0\}$. Thus $X\cap Y$ has codimension 2 in $Y$.

Finally note that if $d_2c=d_1d$ then $$d_2\overline{g}(P)\left(\pa{\overline{f}}{z}\pa{\overline{J}}{x}-\pa{\overline{f}}{x}\pa{\overline{J}}{z}\right)(P)=
d_1\overline{f}(P)\left(\pa{\overline{g}}{z}\pa{\overline{J}}{x}-\pa{\overline{g}}{x}\pa{\overline{J}}{z}\right)(P).$$ Hence we consider the set
$$X=\Big\{(p,F)\in L_\infty\times\Omega_2(d_1,d_2):\ \tilde{J}(F)(p)=$$
$$d_2\tilde{g}(p)\left(\pa{\tilde{f}}{z}\pa{\tilde{J}}{x}-\pa{\tilde{f}}{x}\pa{\tilde{J}}{z}\right)(p)-
d_1\tilde{f}(p)\left(\pa{\tilde{g}}{z}\pa{\tilde{J}}{x}-\pa{\tilde{g}}{x}\pa{\tilde{J}}{z}\right)(p)=0\Big\}.$$
Similarly as above one can show that it has codimension $2$, which concludes the proof.
\end{proof}

Let $C_p$ be the branch of $\overline{C(F)}$ at $P$. We find the Puiseux expansion of the branch $\overline{F}(C_P)$ of $\overline{\Delta(F)}$ at $\overline{F}(P)$.  We have
$$\overline{F}(\gamma(t))=\left(t^{d_1-d_2}\frac{\overline{g}(\gamma(t))}{\overline{f}(\gamma(t))},\frac{t^{d_1}}{\overline{f}(\gamma(t))}\right)=$$
$$\left(t^{d_1-d_2}(1+(d-c)t+\ldots)\frac{\overline{g}(P)}{\overline{f}(P)},\frac{t^{d_1}(1-ct+\ldots)}{\overline{f}(P)}\right).$$

If $d_1=d_2$ then by Lemma \ref{lemstyczne} we have $d-c\neq 0$ and $\overline{F}(C_P)$ is smooth at $\overline{F}(P)$. So assume $d_1>d_2$. Since the function $h(t)=\left(\frac{\overline{f}(P)}{\overline{g}(P)}\frac{\overline{g}(\gamma_P(t))}{\overline{f}(\gamma_P(t))}\right)^{\frac{1}{d_1-d_2}}=
1+\frac{d-c}{d_1-d_2}t+\ldots$ is invertible in $t=0$ we can introduce a new variable $T=th(t)$. We have $\overline{F}(\gamma(T))=\left(T^{d_1-d_2}\frac{\overline{g}(P)}{\overline{f}(P)},T^{d_1}h(t)^{-d_1}(1-ct+\ldots)\frac{1}{\overline{f}(P)}\right)$.
Moreover $h(t)^{-d_1}(1-ct+\ldots)=(1-d_1\frac{d-c}{d_1-d_2}T+\ldots)(1-cT+\ldots)=1+\frac{d_2c-d_1d}{d_1-d_2}T+\ldots$. By Lemma \ref{lemstyczne} we have $d_2c-d_1d\neq 0$ and we can apply Theorem \ref{milnor2} to compute $\delta(C_P)_{\overline{F}(P)}$. Since $a_0=d_1-d_2$, $a_1=d_1$ and $a_2=d_1+1$, we have $2\delta(C_P)_{\overline{F}(P)}=(d_1-1)(d_1-d_2-d)+(d_1+1-1)(d-1)=(d_1-1)(d_1-d_2-1)+(d-1)$.

To proceed further we also need:

\begin{lem}\label{row}
If $F$ is a general mapping then
$$\overline{f}(P_i)^{d_2}\overline{g}(P_j)^{d_1}\neq\overline{f}(P_j)^{d_2}\overline{g}(P_i)^{d_1}$$
for $i,j\in\{1,2,\ldots,d_1+d_2-2\}$ and $i\neq j$.
\end{lem}

\begin{proof}
Consider the set $X=\{(p,q,F)\in L_\infty\times L_\infty\times\Omega_2(d_1,d_2):\ p\neq q,\ \tilde{J}(F)(p)=\tilde{J}(F)(p)=
\tilde{f}(p)^{d_2}\tilde{g}(q)^{d_1}-\tilde{f}(q)^{d_2}\tilde{g}(p)^{d_1}=0\}$. Since one can prove similarly as in Lemma \ref{lemstyczne} that $X$ has codimension $3$, there is a dense open subset $S\subset \Omega(d_1,d_2)$ such that the projection from $X$ has empty fibers over $F\in S$.
\end{proof}

Now we are in a position to compute $\sum_{z\in (\overline{\Delta}\setminus \Delta)} \delta_z$. If $d_1=d_2$ then $\overline{\Delta}$ has exactly $d_1+d_2-2$ smooth points at infinity and consequently $\sum_{z\in (\overline{\Delta}\setminus \Delta)} \delta_z=0$. So assume $d_1>d_2$, then $\overline{\Delta}$ has only one point at infinity $Q=(1:0:0)$.
In $Q$ the curve $\overline{\Delta}$ has exactly $r=d_1+d_2-2$ branches $V_i=\overline{F}(C_{P_i})$. We computed above that $2\delta(V_i)_Q=(d_1-1)(d_1-d_2-1)+(d-1)$. Now we will compute $V_i\cdot V_j$.
Let $t_{a,b}(x,y)=(x+a,y+b)$. By the dynamical definition of intersection there exists a neighborhood $U$ of $0$, such that for small general $a,b$ we have
$$V_i\cdot V_j=\# (U\cap V_i\cap t_{a,b}(V_j)).$$
This means that $V_i\cdot V_j$ is equal to the number of solutions of the following system:
$$\frac{\overline{g}(P_i)}{\overline{f}(P_i)}T^{d_1-d_2}=\frac{\overline{g}(P_j)}{\overline{f}(P_j)}S^{d_1-d_2}+a,$$
$$\frac{1}{\overline{f}(P_i)}T^{d_1}(1+\alpha_iT+\ldots)=\frac{1}{\overline{f}(P_j)}S^{d_1}(1+\alpha_jS+\ldots)+b,$$
where $a,b$ and $S,T$ are sufficiently small. Take
$$Q:(\C^2,0)\to (\C^2,0),$$
$$Q(T,S)=\left(\frac{\overline{g}(P_i)}{\overline{f}(P_i)}T^{d_1-d_2}-\frac{\overline{g}(P_j)}{\overline{f}(P_j)}S^{d_1-d_2},
\frac{1}{\overline{f}(P_i)}T^{d_1}(1+\alpha_iT+\ldots)-\frac{1}{\overline{f}(P_j)}S^{d_1}(1+\alpha_jS+\ldots)\right).$$
Thus we have $V_i\cdot V_j=\mult_0 Q$. Note that by Lemma \ref{row} the minimal homogenous polynomials of the two components of $Q$ have no nontrivial common zeroes, hence $V_i\cdot V_j=d_1(d_1-d_2)$. Consequently
$$\sum_i \delta(V_i)+ \sum_{i>j} V_i\cdot V_j=\frac{1}{2}[(d_1-1)(d_1-d_2-1)+(d-1)](d_1+d_2-2)+$$
$$\frac{1}{2}d_1(d_1-d_2)(d_1+d_2-2)(d_1+d_2-3)=$$
$$\frac{1}{2}d_1(d_1-d_2)(d_1+d_2-2)^2+\frac{1}{2}(-2d_1+d_2+d)(d_1+d_2-2).$$
\end{proof}

We can now prove the following:

\begin{theo}
There is a Zariski open, dense subset $U\subset \Omega_2(d_1,d_2)$
such that for every mapping $F\in U$ the discriminant $\Delta(F)=F(C(F))$ has only cusps and nodes as singularities. Let $d=\gcd(d_1,d_2)$.
Then the number of cusps is equal to
$$c(F)=d_1^2+d_2^2+3d_1d_2-6d_1-6d_2+7$$
and the number of nodes is equal to
$$d(F)=\frac{1}{2}\left[(d_1d_2-4)((d_1+d_2-2)^2-2)-(d-5)(d_1+d_2-2)-6\right].$$
\end{theo}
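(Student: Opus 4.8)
The plan is to read off the cusp count from the earlier analysis and to recover the node count from Serre's genus formula. Since both $c(F)$ and $d(F)$ are symmetric in $d_1,d_2$, I may assume $d_1\ge d_2$. First I would fix $F$ in the intersection of the dense open sets furnished by Theorem \ref{thmcusps} and Lemma \ref{bir} and verify that the only singularities of the projective closure $\overline{\Delta}$ of the discriminant $\Delta=\Delta(F)$ are ordinary cusps, nodes, and points at infinity. A generic $F$ has only folds and simple cusps; a fold maps to a smooth point of $\Delta$ and a simple cusp of $F$ to an ordinary cusp of $\Delta$, while by Lemma \ref{bir} every remaining coincidence of $F|_{C(F)}$ is a transverse meeting of two fold branches, i.e.\ a node. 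The point that makes this trichotomy clean is that at a cusp point $q$ the image tangent direction $d_qF(T_qC(F))$, spanned by $(J_{1,1}(F)(q),J_{1,2}(F)(q))$, vanishes because $F|_{C(F)}$ fails to immerse there; consequently any coincidence involving a cusp automatically satisfies the determinant condition cutting out the codimension $\ge 5$ incidence variety in the proof of Lemma \ref{bir}(3), and so is absent for generic $F$. Thus cusps and nodes are disjoint, and the cusps of $\Delta$ biject with the cusps of $F$, giving $c(F)=d_1^2+d_2^2+3d_1d_2-6d_1-6d_2+7$ by Theorem \ref{thmcusps}.

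Next I would gather the input for Serre's formula (Theorem \ref{thmgenusdelta}) applied to $\overline{\Delta}$, which is irreducible as the image of the connected smooth curve $C(F)$. Its degree is $D=d_1(d_1+d_2-2)$ by Lemma \ref{lemdegdisc}. Because $F|_{C(F)}$ is injective off a finite set (Lemma \ref{bir}(1)), the induced map $\overline{C(F)}\to\overline{\Delta}$ is birational, so the geometric genus of $\overline{\Delta}$ equals the genus $g=\frac{(d_1+d_2-3)(d_1+d_2-4)}{2}$ of the smooth curve $\overline{C(F)}$ from Theorem \ref{thmcusps}. Every ordinary cusp and every node has delta invariant $1$ (for the cusp apply Theorem \ref{milnor2} to the model $t\mapsto(t^2,t^3)$; for the node, two smooth branches meeting transversally), and the total contribution at infinity is the quantity $\sum_{z\in\overline{\Delta}\setminus\Delta}\delta_z$ evaluated in Theorem \ref{theodeltaz}.

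It then remains to solve Serre's identity for $d(F)$. Decomposing the sum over $\Sing(\overline{\Delta})$ into cusps, nodes, and points at infinity yields
$$\tfrac12(D-1)(D-2)=g+c(F)+d(F)+\sum_{z\in\overline{\Delta}\setminus\Delta}\delta_z,$$
so that $d(F)=\tfrac12(D-1)(D-2)-g-c(F)-\sum_{z\in\overline{\Delta}\setminus\Delta}\delta_z$. Substituting $D=d_1(d_1+d_2-2)$, the value of $g$, the expression from Theorem \ref{theodeltaz}, and $c(F)$, and simplifying (it is convenient to put $m=d_1+d_2-2$, so that $c(F)=m^2+d_1d_2-2m-1$ and the remaining terms collapse to $\tfrac12(d_1d_2-1)m^2-\tfrac12 m(d_1+d_2+d-3)$ before $c(F)$ is subtracted), produces exactly $d(F)=\frac12\big[(d_1d_2-4)((d_1+d_2-2)^2-2)-(d-5)(d_1+d_2-2)-6\big]$. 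The case $d_1=d_2$ is subsumed, since Theorem \ref{theodeltaz} then contributes nothing at infinity.

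Since the conceptually heavy steps are discharged by the earlier lemmas, the main obstacle inside this theorem is the accounting rather than new geometry: one must ensure the list of singular points is complete and non-overlapping, so that the delta-invariant sum genuinely decomposes as above, and that the geometric genus is correctly transported along the birational map $F|_{C(F)}$. The concluding step is a single application of Serre's formula followed by the algebraic simplification, which, though routine, is the part most prone to slips and is best handled through the substitution $m=d_1+d_2-2$.
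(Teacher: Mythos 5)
Your proposal is correct and follows essentially the same route as the paper: degree of $\Delta(F)$ from Lemma \ref{lemdegdisc}, birationality and the cusp/node dichotomy from Lemma \ref{bir}, the cusp count and genus from Theorem \ref{thmcusps}, the delta invariants at infinity from Theorem \ref{theodeltaz}, and a single application of Serre's formula (Theorem \ref{thmgenusdelta}) solved for $d(F)$; your intermediate simplification $\tfrac12(d_1d_2-1)m^2-\tfrac12 m(d_1+d_2+d-3)$ checks out and agrees with the paper's $(d_1d_2-2)D^2-(d-1)D$ after subtracting $2c(F)$. Your explicit argument that a cusp point of $F$ cannot share its image with another critical point (since the vanishing of $(J_{1,1},J_{1,2})$ at a cusp forces the determinant condition of Lemma \ref{bir}(3)) is a welcome detail that the paper leaves implicit, but it does not change the structure of the proof.
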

\begin{proof}
Let $d_1\geq d_2$ and $D=d_1+d_2-2$. By Lemma \ref{lemdegdisc} we have $\deg\Delta(F)= d_1D$. From Lemma \ref{bir} we know that $\Delta(F)$ has only cusps and nodes as singularities and is birational with $C(F)$. Hence $\Delta(F)$ has genus $g=\frac{1}{2}(D-1)(D-2)$. Thus by Theorem \ref{thmgenusdelta} we have
$$\frac{1}{2} (d_1D-1)(d_1D-2)=\frac{1}{2}(D-1)(D-2)+c(F)+d(F)+ \sum_{z\in (\overline{\Delta}\setminus \Delta)} \delta_z.$$
 Substituting
$$\sum_{z\in (\overline{\Delta}\setminus \Delta)} \delta_z=\frac{1}{2}d_1(d_1-d_2)D^2+\frac{1}{2}(-2d_1+d_2+d)D$$
from Theorem \ref{theodeltaz} we obtain
$$2(c(F)+d(F))=d_1d_2D^2-D^2+3D-d_1D-d_2D-dD=(d_1d_2-2)D^2-(d-1)D.$$
Thus by Theorem \ref{thmcusps} we get:
$$d(F)=\frac{1}{2}\left[(d_1d_2-2)D^2-(d-1)D-2(D^2-2D+d_1d_2-1)\right]=$$
$$\frac{1}{2}\left[(d_1d_2-4)(D^2-2)-(d-5)D-6\right].$$
\end{proof}

\section{Generalized cusps}\label{secGC}

In this section our aim is to estimate the number of cusps of
non-generic mappings. We start from:

\begin{defi}\label{dfGenCus}
Let $F : (\C^2,a)\to (\C^2,F(a))$ be a holomorphic mapping. We say
that $F$ has a generalized cusp at $a$ if  $F_a$ is proper, the
curve $J(F)=0$ is reduced near $a$ and  the discriminant of $F_a$
is not smooth at $F(a)$.
\end{defi}

\begin{re}
If $F_a$ is proper, $J(F)=0$ is reduced near $a$ and $J(F)$ is
singular at $a$ then it follows from Corollary 1.11 from \cite{jel} that
also the discriminant of $F_a$ is singular at $F(a)$ and hence
$F$ has a generalized cusp at $a$.
\end{re}

Now we introduce the index of generalized cusp:

\begin{defi}\label{dfGenCusIn}
Let $F=(f,g): (\C^2,a)\to (\C^2,F(a))$ be a holomorphic mapping.
Assume that $F$ has a generalized cusp at a point $a\in \C^2$.
Since the curve $J(F)=0$ is reduced near $a$, we have that the set
$\{\nabla f=0\}\cap \{\nabla g=0\}$ has only isolated points near
$a.$ For  a  general linear mapping $T\in GL(2)$,  if
$F'=(f',g')=T\circ F$ then $\nabla f'$ does not vanish
identically on any branch of $\{J(F)=0\}$ near $a$.  We say that
the  cusp of $F$ at $a$ has an index $\mu_a:={\dim}_\C {\mathcal
O}_a/(J(F'), J_{1,1}(F'))-{\dim}_\C {\mathcal O}_a/(f'_x, f'_y)$.
\end{defi}

\begin{re}
We show below that the index $\mu_a$ is well-defined and
finite. Moreover, it is easy to see that a simple cusp has index
one.
\end{re}

\begin{re}
Using the exact sequence $1.7$ from \cite{gm1} we see that
$$\mu_a={\dim}_\C {\mathcal O}_a/(J(F), J_{1,1}(F), J_{1,2}(F)).$$
Hence our index coincides with the classical local number of cusps
defined e.g. in \cite{gm1}.
\end{re}

We have (compare with  \cite{gm1}, \cite{gm2}, \cite{gaf}):

\begin{theo}
Let $F=(f,g)\in \Omega_2(d_1,d_2)$ and assume that $F$ has a
generalized cusp at $a\in \C^2$.   If $U_a$ is a sufficiently
small ball around $a$ then $\mu_a$ is equal to the number of
simple cusps in $U_a$ of a general mapping $F'\in \Omega_2(d_1',
d_2')$, where $d_1'\ge d_1, d_2'\ge d_2$, which is sufficiently
close to $F$ in the natural topology of $\Omega_2(d_1', d_2')$.
\end{theo}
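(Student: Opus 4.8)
The plan is to prove the stated theorem, which asserts that the index $\mu_a$ of a generalized cusp at $a$ equals the number of simple cusps appearing in a small ball $U_a$ after a sufficiently small generic perturbation of $F$ within a possibly larger jet space $\Omega_2(d_1',d_2')$. The strategy is to realize $\mu_a$ as an intersection multiplicity at $a$ and then to invoke conservation of intersection number under deformation, identifying the perturbed solutions with the simple cusps of the nearby map.

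First I would reduce to a convenient normal form. By Definition \ref{dfGenCusIn} we may replace $F$ with $F'=T\circ F$ for a general $T\in GL(2)$ so that $\nabla f$ does not vanish identically on any branch of $\{J(F)=0\}$ near $a$; intersection numbers and cusp counts are unchanged by composing with the fixed linear isomorphism $T$ of the target, so this costs nothing. In this setting Theorem \ref{disc} and the computation in Theorem \ref{thmcusps} apply locally: a simple cusp of a map is exactly a transversal intersection point of $J$ and $J_{1,1}$ at which $\nabla f\neq 0$, and by the final remark preceding the theorem we have the identity
$$\mu_a={\dim}_\C {\mathcal O}_a/(J(F'),J_{1,1}(F'),J_{1,2}(F')),$$
which is finite precisely because the generalized-cusp hypothesis forces $\{J(F)=0\}$ to be reduced and the common locus $\{\nabla f'=0\}\cap\{\nabla g'=0\}$ to be isolated. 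Thus $\mu_a$ is a genuine local algebraic multiplicity attached to the germ $F'_a$.

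Next I would set up the deformation. Apply Proposition \ref{generic} and Definition \ref{gen} in the larger space $\Omega_2(d_1',d_2')$ with $d_i'\geq d_i$ to select a generic mapping $F''$ arbitrarily close to $F'$; since $d_i'\geq d_i$ the germ $F'$ lies in this space and a generic $F''$ has only two-folds and simple cusps as singularities. The cusps of $F''$ in $U_a$ are the points of $J(F'')\cap J_{1,1}(F'')$ with $\nabla f''\neq 0$ lying in $U_a$, each counted once because genericity (via Lemma \ref{infty4}) makes the intersection transversal. The core of the argument is a conservation-of-number statement: for $U_a$ a sufficiently small ball and $F''$ sufficiently close to $F'$, the number of such transversal cusp points inside $U_a$ equals the local intersection multiplicity of $J$ and $J_{1,1}$ at $a$ for $F'$, corrected by the contribution coming from the locus $\nabla f'=0$. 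Concretely, the total Bezout-type count $J\cdot J_{1,1}$ near $a$ splits, in the limit, into the simple-cusp part (points with $\nabla f''\neq 0$) and a part absorbed by $\{\nabla f'=0\}$, and the former is exactly ${\dim}_\C {\mathcal O}_a/(J(F'),J_{1,1}(F'),J_{1,2}(F'))=\mu_a$, using the third defining equation $J_{1,2}$ to isolate the cusp component from the ramification-at-$\nabla f'=0$ component.

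The main obstacle will be making the conservation-of-number argument rigorous in the presence of the locus $\{\nabla f'=0\}$: one must show that as $F''\to F'$ no cusps escape to the boundary of $U_a$ and that the cusps with $\nabla f''=0$ do not persist, i.e. that the full intersection multiplicity of the three equations $(J,J_{1,1},J_{1,2})$ is precisely the mass that reorganizes into honest simple cusps. I would handle this by the standard semicontinuity and properness argument: choose $U_a$ so that $\overline{U_a}\cap\{J(F')=J_{1,1}(F')=J_{1,2}(F')=0\}=\{a\}$, then for $F''$ close enough the solutions stay in $U_a$ by continuity of roots, and the finiteness of $\mu_a$ together with the flatness of the family guarantees that the sum of the local cusp-multiplicities of $F''$ over $U_a$ equals $\mu_a$; finally, Lemma \ref{infty3} applied to $F''$ shows points with $\nabla f''=0$ are not cusps, so every unit of $\mu_a$ is realized by a distinct transversal simple cusp, completing the count.
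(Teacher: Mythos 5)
Your overall strategy---realize $\mu_a$ as a local multiplicity at $a$ and conserve it under a generic perturbation, then use genericity (Lemma \ref{infty3}) to discard the points with $\nabla f''=0$---is the right one, and your reduction via a generic $T\in GL(2)$ matches the paper. The genuine gap is the conservation step itself. You choose to conserve the colength $\dim_\C{\mathcal O}_a/(J,J_{1,1},J_{1,2})$ directly, asserting that ``the finiteness of $\mu_a$ together with the flatness of the family'' forces the perturbed total multiplicity in $U_a$ to equal $\mu_a$. But flatness is precisely what has to be \emph{proved} here, and it does not follow from finiteness: the cusp ideal has three generators in two variables, so its zero scheme is not a complete intersection, and for such ideals the colength is in general not preserved under deformation (continuity of roots controls where the zeros go, not how much length they carry). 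To justify flatness you would need a structural input, e.g.\ that $(J,J_{1,1},J_{1,2})$ is the ideal of $2\times 2$ minors of the matrix with rows $(f_x,f_y)$, $(g_x,g_y)$, $(J_x,J_y)$, hence Cohen--Macaulay of expected codimension by Eagon--Northcott and therefore flat over the parameter space where its fibers are finite---or, equivalently, the Gaffney--Mond exact sequence you cite only for the identification of $\mu_a$. The paper sidesteps this entirely: it works with the \emph{definition} $\mu_a=\dim_\C{\mathcal O}_a/(J,J_{1,1})-\dim_\C{\mathcal O}_a/(f'_x,f'_y)$, a difference of two complete-intersection local degrees $d_a(\Phi)-d_a(\Psi)$, conserves each separately by the Rouch\'e theorem, and then counts: for a generic nearby $F''$ all zeros of both systems are simple, every zero of $\nabla f''$ is automatically a zero of $(J(F''),J_{1,1}(F''))$, and the non-cusp zeros of $(J(F''),J_{1,1}(F''))$ are exactly the zeros of $\nabla f''$; subtraction gives the cusp count. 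You gesture at this splitting in your third paragraph, but your actual argument runs through the unproved flatness claim, so as written the key step fails.

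A second, smaller gap: you assert that $\mu_a$ is finite ``precisely because'' $\{J(F)=0\}$ is reduced and $\{\nabla f'=0\}\cap\{\nabla g'=0\}$ is isolated. That is not the operative reason. For $F=(x,xy)$ one has $J=x$ reduced and $\nabla f$ nowhere zero, yet $J_{1,1}\equiv 0$ and the colength is infinite; what excludes this behaviour is \emph{properness} of the germ $F_a$. The paper proves finiteness by exactly this route: if $J_{1,1}$ vanished identically on a branch $B$ of $\{J=0\}$ through $a$ (with $\nabla f$ not identically zero on $B$), then $F$ would be constant on $B$, so $F_a^{-1}(F(a))$ would be positive-dimensional, contradicting properness. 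Your proof needs this finiteness argument before any Rouch\'e-type or conservation statement can even be formulated, and it cannot be extracted from reducedness and isolatedness of the gradient locus alone.
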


\begin{proof}
We can assume that  $\nabla f$ does not vanish identically on any
branch of $\{J(F)=0\}$ near $a.$ In particular we have ${\rm dim}\
{\mathcal O_a}/(f_x,f_y)= {\rm dim } \ {\mathcal O_a}/(J(F), f_x,
f_y)<\infty.$

Let $F_i=(f_i, g_i)\in\Omega_2(d_1', d_2')$ be a sequence of
general mappings, which is convergent to $F.$ Consider the
mappings $\Phi=(J(F), J_{1,1}(F))$, $\Phi_i=(J(F_i),
J_{1,1}(F_i))$, $\Psi=(\nabla f)$ and $\Psi_i=(\nabla f_i).$ Thus
$\Phi_i\to \Phi$ and $\Psi_i\to \Psi.$

Since $a$ is a cusp of $F$ we have $\Phi(a)=0$. Moreover
$d_a(\Phi)<\infty,$ where $d_a(\Phi)$ denotes the local
topological degree of $\Phi$ at $a$. Indeed, if $J_{1,1}(F)=0$ on
some branch $B$ of the curve $J(F)=0$ then the rank of $F_{|B}$
would be zero and by Sard theorem $F$ has to contract $B$, which
is a contradiction ($F_a$ is proper). By the Rouche Theorem we
have that for large $i$ the mapping $\Phi_i$ has exactly
$d_a(\Phi)$ zeroes in $U_a$ and $\Psi_i$ has exactly
$d_a(\Psi)$ zeroes in $U_a$ (counted with multiplicities, if
$\Psi(a)\not=0$ we put $d_a(\Psi)=0$). However, the mappings $F_i$
are general, in particular all zeroes of $\Phi_i$ and $\Psi_i$ are
simple. Moreover the zeroes of $\Phi_i$ which are not cusps of $F_i$ are
zeroes of $\Psi_i$.
Hence $\mu_a=d_a(\Phi)-d_a(\Psi)$ is indeed the number of
simple cusps of $F_i$ in $U_a$.
\end{proof}

\begin{co}
Let $F\in \Omega_2(d_1,d_2)$. Assume that $F$ has generalized
cusps at points $a_1,\ldots, a_r$. Then $\sum^r_{i=1} \mu_{a_i}\le
d_1^2+d_2^2+3d_1d_2-6d_1-6d_2+7.$
\end{co}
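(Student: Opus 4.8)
The plan is to realize the generalized cusps of $F$ as degenerations of the simple cusps of a nearby generic mapping and then apply the global count of cusps established in Theorem \ref{thmcusps}. By the preceding theorem, for each $i$ the index $\mu_{a_i}$ equals the number of simple cusps, inside a small ball $U_{a_i}$, of a general mapping $F'\in\Omega_2(d_1',d_2')$ sufficiently close to $F$, where $d_1'\ge d_1$ and $d_2'\ge d_2$. The first step is therefore to choose the balls $U_{a_1},\ldots,U_{a_r}$ pairwise disjoint, which is possible since the generalized cusps occur at finitely many distinct points, and then to pick a single general perturbation $F'$ that is simultaneously close enough to $F$ to govern the cusps in each $U_{a_i}$.

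Next I would sum the conclusions of the previous theorem over $i$: the total number of simple cusps of $F'$ lying in the disjoint union $\bigcup_{i=1}^r U_{a_i}$ is exactly $\sum_{i=1}^r \mu_{a_i}$. Since $F'$ is general, Theorem \ref{thmcusps} tells us that $F'$ has only two-folds and simple cusps as singularities, and that the total number of simple cusps of $F'$ over all of $\C^2$ equals $(d_1')^2+(d_2')^2+3d_1'd_2'-6d_1'-6d_2'+7$. The cusps counted inside the balls are a subset of all cusps of $F'$, so
$$\sum_{i=1}^r \mu_{a_i}\le (d_1')^2+(d_2')^2+3d_1'd_2'-6d_1'-6d_2'+7.$$
The remaining step is to descend from $d_1',d_2'$ back to $d_1,d_2$. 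The cleanest choice is simply $d_1'=d_1$ and $d_2'=d_2$, which is permitted by the theorem's hypotheses $d_1'\ge d_1$, $d_2'\ge d_2$; then the right-hand side is precisely $d_1^2+d_2^2+3d_1d_2-6d_1-6d_2+7$, giving the claimed bound directly.

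The main point requiring care is that the approximating general mappings $F'$ must be taken in $\Omega_2(d_1,d_2)$ itself, not merely in some larger $\Omega_2(d_1',d_2')$, so that the global cusp count of Theorem \ref{thmcusps} yields exactly the asserted quantity rather than a larger one. This is legitimate because the previous theorem applies with $d_1'=d_1$, $d_2'=d_2$, and because a general member of $\Omega_2(d_1,d_2)$ is generic in the sense of Definition \ref{gen}, so Theorem \ref{thmcusps} is available. The only subtlety is verifying that a single $F'$ can control all $r$ balls at once; this follows from the Rouch\'e-type stability argument in the previous theorem, since the localized cusp counts depend only on $C^0$-closeness of the auxiliary maps $\Phi=(J(F),J_{1,1}(F))$ and $\Psi=(\nabla f)$ on each $\overline{U_{a_i}}$, and finitely many such closeness conditions are simultaneously satisfied on a nonempty open set of perturbations.
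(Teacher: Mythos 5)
Your proof is correct and is essentially the paper's own (implicit) argument: the corollary is stated without proof precisely because it follows, as you show, by taking $d_1'=d_1$, $d_2'=d_2$ in the preceding theorem, choosing disjoint balls around the $a_i$ and a single general $F'\in\Omega_2(d_1,d_2)$ close enough to $F$ for all of them, and then bounding the resulting sum of local cusp counts by the global cusp count of Theorem \ref{thmcusps}. Your remarks on the simultaneity of the finitely many closeness conditions and on the legitimacy of staying inside $\Omega_2(d_1,d_2)$ are exactly the details the paper leaves to the reader.
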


    \end{document}